\def\bX{\mathbf{X}}
\def\bN{\mathbf{N}}
\def\x{\mathbf{x}}
\def\n{\mathbf{n}}
\def\r{\mathbf{r}}
\def\cH{\mathcal{H}}
\def\cS{\mathcal{S}}
\def\cC{\mathcal{C}}
\def\cG{\mathcal{G}}
\def\cN{\mathcal{N}}
\def\cX{\mathcal{X}}
\def\cZ{\mathcal{Z}}
\def\r{\mathbf{r}}
\def\N{\mathbb{N}}
\def\L{\mathbb{L}}
\def\R{\mathbb{R}}
\def\C{\mathbb{C}}
\def\D{\mathbb{D}}
\def\H{\mathbb{H}}
\def\S{\mathbb{S}}
 \newtheorem{defi}{Definition}[section]
  \newtheorem{teo}[defi]{Theorem}
 \newtheorem{pro}[defi]{Proposition}
 \newtheorem{cor}[defi]{Corollary}
 \newtheorem{lem}[defi]{Lemma}
 \newtheorem{remark}{Remark}
\numberwithin{equation}{section}
\begin{document}
\footnotetext{Research partially supported by Ministerio de Educaci\'on Grants No: MTM2013-43970-P,  No: PHB2010-0109, Junta de Anadaluc\'\i a Grants No. FQM325, N0. P06-FQM-01642. Minist\'erio de Ci\^encia e Tecnologia, CNPq Proc. No. 303774/2009-6. Minist\'erio de Educa\c{c}\~ao, CAPES/DGU Proc. No. 23038010833/2010-37. }
\title{A connection between flat fronts in hyperbolic space and minimal surfaces in euclidean space}
\author{ Antonio Mart\'{\i}nez, Pedro Roitman, Keti Tenenblat}
\date{}
\maketitle
{\small 
\noindent Departamento de Geometr\'\i a y Topolog\'\i a, Universidad de Granada, E-18071 Granada, Spain\\ 
e-mail: amartine@ugr.es

\vspace{.05in}

\noindent Departamento de Matem\'atica, Universidade de Bras\'\i lia, 70910-900 Bras\'\i lia, Brasil \\ 
e-mail: roitman@mat.unb.br

\vspace{.05in}
\noindent Departamento de Matem\'atica, Universidade de Bras\'\i lia, 70910-900 Bras\'\i lia, Brasil \\ 
e-mail: K.Tenenblat@mat.unb.br}
 \begin{abstract}
A geometric construction is provided that associates to a given flat
front in $\H^3$ a pair of minimal surfaces in $\R^3$ which are related
by a Ribaucour transformation.  This construction is generalized
associating to a given frontal in $\H^3$, a pair of frontals in $\R^3$
that are envelopes of a smooth congruence of spheres. The theory of
Ribaucour transformations for minimal surfaces is reformulated in
terms  of a complex Riccati ordinary differential equation for a
holomorphic function. This enables one to simplify and extend the
classical theory, that in principle only works for umbilic free and
simply connected  surfaces,  to surfaces with umbilic points and non
trivial topology. Explicit examples are included.
 \end{abstract}
\noindent \hspace*{2ex} 2000 {\it  Mathematics Subject Classification}: 53A35, 53C42

\noindent \hspace*{2ex}{\it Keywords:} Minimal surfaces, Flat fronts, Ribaucour transformations,
hyperbolic space.
\section{Introduction}
It is well known that minimal surfaces in Euclidean space $\mathbb{R}^3$ and flat fronts in hyperbolic space $\mathbb{H}^3$ admit a holomorphic representation. They also share in common the fact that there are many interesting global theorems about their geometry and topology,  see for instance \cite{GMM}, \cite{KUY} and \cite{R}. From the point of view of partial differential equations, both classes are intimately related to the Monge-Amp\`{e}re equation.
$$
\det{(\nabla^2\varphi)}=1.
$$
However, despite these similarities, as far as we know, there is no direct geometric link between these two classes of surfaces that are immersed in different ambient spaces.

What we offer in this work is a geometric construction that associates to a
given flat front  in $\mathbb{H}^3$ a pair of minimal surfaces in $\mathbb{R}^3$ that are related by a Ribaucour transformation. 

This construction is a particular case of a geometric method to associate a given frontal   in $\mathbb{H}^3$ to a pair of frontals in $\mathbb{R}^3$ that are the envelopes of a smooth congruence of spheres. We believe that this construction will help to unravel interesting relation between surfaces immersed in $\mathbb{H}^3$ and $\mathbb{R}^3$.

Ribaucour transformations for minimal surfaces were studied by \cite{Bi} and revisited in \cite{CFT}, \cite{CFT2}, and they can be viewed as a method to generate new examples of minimal surfaces by starting with a given simpler one. In this transformation process lines of curvature are preserved and the transformed surface might have new planar ends.

As an application, we discuss in detail how the classical theory of Ribaucour transformations of minimal surfaces can be reformulated in terms of a complex Riccati ordinary differential equation for a holomorphic function.  We will show how this fact enables one to simplify and extend the classical theory, that only works in principle for umbilic free and simply connected surfaces, to surfaces with umbilic points and non trivial topology.

We also show how the classical results about the Riccati equation can be used to control the asymptotic behavior of the ends of the transformed surface.

Finally, as an example, we also compute some Ribaucour transformations of the trinoid of Jorge-Meeks and compute the flat front associated to a Ribacour transformation of the catenoid. 
 
The geometric connection between minimal and flat  fronts  mentioned above stems from the classical idea of viewing the Lorentz-Minkowski 4-space $\mathbb{L}^4$ as the space of oriented spheres (including points as a limiting case) in euclidean space. In this way, to every surface immersed in $\mathbb{H}^3 \subset \mathbb{L}^4$ we can associate a smooth two parameter family of spheres in $\mathbb{R}^3$, or, in other words, a congruence of spheres.

Roughly speaking, we will show that if we start with a flat front $S \subset \mathbb{H}^3$ and transform the associated congruence of spheres in a convenient way, we end up with another congruence of spheres that has a pair of minimal surfaces as envelopes of the congruence. The process can also be reversed in the sense that if we start with a pair of minimal surfaces that are the envelopes of a congruence of spheres, and such that the lines of curvature correspond pointwise between the envelopes, then this congruence of spheres can be itself transformed in such a way that it corresponds to a flat front in $\mathbb{H}^3$. 

This work is organized as follows. In section \ref{gc} we briefly review Ribaucour transformations and  discuss the geometric construction that associates  a  frontal in $\mathbb{H}^3$ to a pair of frontals in $\mathbb{R}^3$. As a consequence of this construction,  we also obtain a Small's representation formula for frontals in $\mathbb{H}^3$.

In section \ref{flatminimal} we treat the special and important case where the frontal in $\mathbb{H}^3$ is a flat front. In this case, we prove the pair of surfaces in $\mathbb{R}^3$ constitute a Ribaucour pair of minimal surfaces. Section \ref{examples} is devoted to new examples of Ribaucour pairs that lie outside the scope of the classical theory of Ribaucour transformations. Finally, in section \ref{finalremarks} we collect our concluding remarks.
   
\section{ A geometric construction}
\label{gc}
This section is devoted to an explanation of a geometric method which  gives a canonical relationship between a flat front in $\mathbb{H}^3$ and a pair of minimal surfaces in $\mathbb{R}^3$. 

\subsection{The space of oriented spheres as a bridge between $\H^3$ and $\R^3$}
\label{spacespheres}
 
We are going to consider frontals which are, up to some degenerate cases,   surfaces admitting singularities but with a globally defined Gauss map that can be smoothly extended across the singular set. Actually, we  will explain how to relate a given frontal in the hyperbolic space to a pair  of  frontals in $\R^{3}$.

\

Before proceeding to our description, let us clarify some notations and terminology about frontals and fronts, see  \cite{SUY, SUY2, SU} for more details. 
\\
Let $\Sigma$ be an oriented  2-manifold and  $(M^3,g)$ an oriented Riemannian 3-manifold with unit tangent bundle $T_1M^3$. A smooth map  $\cX : \Sigma \longrightarrow M^3$,  is a 
{\sl frontal} if there exists a smooth unit  normal vector field $\cN $ of $M^3$ along $\cX$, that is, 
\begin{equation}
g(d\cX(X),\cN) = 0, \qquad  \forall \ X\in T\Sigma. \label{leg}
\end{equation}
In addition, if $f=(\cX ,\cN):\Sigma \longrightarrow T_1M^3$ is an immersion,  $\cX$ is called a {\sl front}.
\\
The vector field $\cN$ is called the {\sl unit normal} of  $\cX$. The first, second and third  fundamental forms are defined in the same way as for surfaces. 
\\
A point $p\in \Sigma$ is {\sl a singular point}  if $\cX$  is not an immersion at $p$. The set ${\cS_\cX}$ of singular points of $\cX$ is called the {\sl singular set} of $\cX$.
\begin{remark}\label{rim} {\rm It is remarkable  that every minimal front $\cX:\Sigma \longrightarrow \R^3$ is always a minimal immersion, that is $\cS_\cX=\emptyset$.}
\end{remark}

Let $\L^{4}$ be the Lorentz-Minkowski $4$-space endowed with the usual Lorentz metric, $\ll.,.\gg$ and $\mathbb{H}^{3} = \{\r \in \L^{4}\ | \ll \r,\r\gg = -1\}$ be the hyperboloid model for the hyperbolic space of constant sectional curvature $-1$. 

To understand the geometry of surfaces in $\mathbb{H}^3$ it is essential to consider the positive null cone $\N^3$, defined by
$$ \N^3=\{\r \in \L^{4}\ | \ll \r,\r\gg = 0\}.$$
If one considers for all $\r \in \N^3$ the half line $[\r]$ spanned by $\r$, then the ideal boundary of $\H^3$ can be regarded as the quotient of $\N^3$ under this action. In addition, the induced metric is well-defined up to a factor and the ideal boundary of $\H^3$ inherits a natural conformal structure as the quotient $\N^3/\R^+$. Thus, we will from now on identify the ideal boundary of $\H^3$ with the unit sphere $\mathbb{S}^2$.

We will now consider $\L^{4}$ as the space of oriented spheres in $\R^{3}$ in the classical way, see \cite{BL}, that is, we associate to the sphere centered at $\x$ with radius $r>0$ and orientation given by the inner normal the point $(r,\x)\in \L^{4}$. To the sphere with opposite orientation we associate the point $(-r,\x)$. Points of $\R^{3}$ are treated as a limiting case where the spheres have radius zero.

The key idea to pass from a frontal  in $\mathbb{H}^3\subset \L^{4}$ to a pair of frontals in $\mathbb{R}^3$ is to consider the smooth  two parameter family of spheres defined by the starting frontal (called a congruence of spheres) and its two envelopes. 

We recall that a surface $\Sigma$ is an envelop of a congruence of spheres if $\forall p\in \Sigma$ there is one sphere $S_{p}$ of the congruence that is tangent to it at $p$. Since in this work we deal with  frontals, we will adopt the definition that a  frontal   $\Sigma$ is an envelop of a congruence of spheres if $\forall p\in \Sigma$ there is one sphere $S_{p}$ of the congruence such that the normal vector to $\Sigma$ and $S_{p}$ at $p$ are parallel.

\

We start our considerations by establishing how the geometry of a  frontal  in $\mathbb{H}^3$ is related to two frontals in $\R^3$ that are  envelopes of the induced congruence of spheres.

 Let $\Sigma$ be an oriented  2-manifold and $\bX:  \Sigma \longrightarrow \H^3$,
$ \bX:= (r, \x)$ a frontal  with unit normal  $\bN=(s,\n)$ and hyperbolic Gauss maps $\cG_+$ and $\cG_-$. Then $r^2\neq s^2$ and,  if we denote by $\Pi:\S^2\longrightarrow \C\cup\{\infty\}$ the usual stereographic projection,   we can write, 
\begin{align} \cG_+ &= \Pi\circ \cN^+, \qquad  \bX+\bN =  (r+s)(1,\cN^+)\\ 
 \cG_- &= \Pi\circ \cN^-, \qquad  \bX-\bN =  (r-s)(1,\cN^-).
\end{align}
\begin{pro}\label{p1}
$\bX$  is determined  by a unique pair of frontals $\cX^+,\cX^-:\Sigma \longrightarrow \R^3$ satisfying 
\begin{itemize}
\item $\cN_+$ and $\cN_-$ are the unit normals of $\cX^+$ and $\cX^-$, respectively.
\item $\cX^+$ and $\cX^-$ are envelopes of a congruence of spheres and the following symmetry condition hold: 
\begin{equation}
 \|\cX^+\|^2 \ \cX^- + \cX^+ =  \|\cX^-\|^2  \ \cX^+ + \cX^- = 0,\label{sc}
\end{equation}
where  $\|.\|$ denotes the usual Euclidean norm.
\end{itemize}
\end{pro}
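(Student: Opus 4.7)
The plan is to propose the explicit formulas $\cX^{\pm}:=\x-r\,\cN^{\pm}$, verify the three listed properties by direct computation, and then use the symmetry condition together with the envelope condition to pin down uniqueness.

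First, I would unpack the algebraic content of the hypotheses. From $\ll\bX,\bX\gg=-1$, $\ll\bN,\bN\gg=1$ and $\ll\bX,\bN\gg=0$ one extracts
\[
\|\x\|^{2}=r^{2}-1,\qquad \|\n\|^{2}=s^{2}+1,\qquad \x\cdot\n=rs,
\]
while differentiating the first relation and using $\ll\bN,d\bX\gg=0$ yields $\x\cdot d\x=r\,dr$ and $\n\cdot d\x=s\,dr$. Nullness of $\bX\pm\bN$ makes $\cN^{\pm}=(\x\pm\n)/(r\pm s)$ Euclidean unit vectors, and the two differential identities combine to the key relation $\cN^{\pm}\cdot d\x=dr$.

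With these at hand the existence half is mechanical. Substitution gives the clean forms $\cX^{+}=(s\x-r\n)/(r+s)$ and $\cX^{-}=(-s\x+r\n)/(r-s)$. The normal condition follows from
\[
d\cX^{\pm}\cdot\cN^{\pm}=\bigl(d\x-dr\,\cN^{\pm}-r\,d\cN^{\pm}\bigr)\cdot\cN^{\pm}=\cN^{\pm}\cdot d\x-dr=0,
\]
using $\|\cN^{\pm}\|=1$ and hence $d\cN^{\pm}\cdot\cN^{\pm}=0$. Since $\|\cX^{\pm}-\x\|=r$, each $\cX^{\pm}(p)$ lies on the sphere of centre $\x(p)$ and radius $r(p)$ assigned to $p$ by the Lie correspondence, and its unit normal $\cN^{\pm}$ is parallel (up to sign) to the outward sphere normal $-\cN^{\pm}$, so $\cX^{\pm}$ are envelopes of this induced congruence. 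Finally, the algebraic identities above give $\|\cX^{\pm}\|^{2}=(r\mp s)/(r\pm s)$, and a one-line substitution reduces both symmetry relations to $0$.

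For uniqueness, I would take any admissible pair $(\tilde\cX^{+},\tilde\cX^{-})$. The envelope condition with prescribed normals $\cN^{\pm}$ forces both $\tilde\cX^{\pm}(p)$ to be tangency points on a common sphere at $p$; matching this sphere to the congruence induced by $\bX$ gives $\tilde\cX^{\pm}=\x+\sigma^{\pm}r\,\cN^{\pm}$ with signs $\sigma^{\pm}\in\{+1,-1\}$, locally constant by smoothness. Testing the four sign combinations in $\|\tilde\cX^{+}\|^{2}\tilde\cX^{-}+\tilde\cX^{+}=0$ and reusing the norm formulas, only $\sigma^{+}=\sigma^{-}=-1$ produces an identity; the other three leave a residual that cannot vanish on an open set. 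I expect the delicate step to be precisely this identification of the underlying sphere congruence with the one induced by $\bX$ -- once it is in place, everything collapses to a finite sign-branch check under the standing assumption $r^{2}\ne s^{2}$ that keeps $\cN^{\pm}$ well defined.
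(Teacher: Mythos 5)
Your existence half is essentially the paper's own argument: the formulas $\cX^{\pm}=\x-r\,\cN^{\pm}$ are exactly \eqref{a1}--\eqref{a2}, and your verifications of $\|\cN^{\pm}\|=1$, $\langle \cN^{\pm},d\cX^{\pm}\rangle=0$, the envelope property and the symmetry condition \eqref{sc} are correct. The problem is the second half, and the gap is precisely where you flag it -- but it is not a formality that can be waved through. The hypotheses only require $\cX^{+},\cX^{-}$ to be envelopes of \emph{some} congruence of spheres; nothing in the statement identifies that congruence with the one induced by $\bX$ (center $\x$, radius $r$), so you are not entitled to write $\tilde\cX^{\pm}=\x+\sigma^{\pm}r\,\cN^{\pm}$ and reduce uniqueness to a four-way sign check. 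The paper closes this intrinsically: writing the common center of the assumed congruence as $\cX^{+}+\tau\cN^{+}=\cX^{-}+\tau\cN^{-}$ for its (a priori unknown) radius function $\tau$ gives $\cX^{+}-\cX^{-}=-\tau(\cN^{+}-\cN^{-})$, while the symmetry condition forces $\cX^{+}$ and $\cX^{-}$ to be antiparallel with $\|\cX^{+}\|\,\|\cX^{-}\|=1$; together these yield $\cX^{+}=\lambda(\cN^{+}-\cN^{-})$, $\cX^{-}=-\mu(\cN^{+}-\cN^{-})$ with $2\lambda\mu(1-\langle\cN^{+},\cN^{-}\rangle)=1$, i.e.\ \eqref{eta1}--\eqref{eta2}, and differentiating against $\cN^{\pm}$ produces the first-order equations \eqref{dnu} that determine $\lambda$ and $\mu$ up to one multiplicative constant.

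The second omission is the clause ``$\bX$ is \emph{determined by}'' the pair, which your proposal never addresses and which is the part the rest of the paper actually uses. From the form of the pair just derived, the paper reconstructs $\bX=-\tfrac{1}{2\rho^{+}}(1,\cN^{+})-\tfrac{1}{2\rho^{-}}(1,\cN^{-})$ as in \eqref{eq5}, with $\rho^{+}=1/(2\mu)$ and $\rho^{-}=1/(2\lambda)$ the support functions from \eqref{rhop}--\eqref{rhom}. This reconstruction is what pins down the remaining multiplicative constant for a given $\bX$ (hence completes the uniqueness of the pair), and it is the representation formula on which Theorem \ref{t0} and Proposition \ref{p3} are built. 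To repair your argument you must either prove, rather than assume, that the enveloped congruence coincides with the one induced by $\bX$, or carry out this converse reconstruction directly.
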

\begin{proof}
If we consider the smooth maps $\cX^+,\cX^-:\Sigma \longrightarrow \R^3$ given by
  \begin{align}
\cX^+ &= \x - \frac{r}{r+s}(\x+\n), \label{a1}\\
\cX^- &= \x - \frac{r}{r-s}(\x-\n),\label{a2}
\end{align}
it is fairly easy to check that  $\cN^+$ and $\cN^-$, given by
\begin{align}
\cN^+&=\frac{1}{r+s}(\x+\n),\label{n1}\\
\cN^-&=\frac{1}{r-s}(\x-\n).\label{n2}
\end{align}
satisfy
\begin{equation}
\|\cN^+\| = \|\cN^-\| = 1, \quad < \cN^+,d\cX^+> = < \cN^-,d\cX^->=0, \label{n3}
\end{equation}
where by $< . \ ,  . >$ we will denote the standard inner product in $\R^3$. 

From \eqref{n3},   $\cN^+$ ( resp. $\cN^-$) is  the unit normal  of $\cX^+$  (resp.  $\cX^-$)  and from \eqref{a1}, \eqref{a2}, \eqref{n1} and \eqref{n2} we have that \eqref{sc} holds and  $\cX^+, \cX^-$ are the envelopes of a congruence of spheres with center
$$\x =  \cX^+ + r \  \cN^+ = \cX^-  + r \ \cN^-. $$
and radius $r$.

\

Conversely, assume $\cX^+, \cX^-:\Sigma \longrightarrow \R^3$ satisfy (\ref{sc}), if they are the envelopes of a congruence of spheres with center
$$ \x= \cX^+ + r \  \cN^+ = \cX^-  + r \ \cN^-$$ and radius $r$ such that  $\cN^+\neq \cN^-$ everywhere. 
Then,  from \eqref{sc}, we can write 
\begin{align}
& \cX^+= \lambda  (\cN^+ - \cN^-), \quad \cX^- = - \mu (\cN^+ - \cN^-) , \label{eta1}\\
& 2\lambda \mu (1- <\cN^+,\cN^->)= 1,\label{eta2}
\end{align}
for some smooth functions $\lambda$ and $\mu$.

 From the   
above expressions,  if we consider  $\rho^+$ and $\rho^-$ the support functions of $\cX^+$ and $\cX^-$, respectively, and take differentiation in \eqref{eta1}, then the following expressions hold
\begin{align}
\rho^+ &=<\cX^+,\cN^+>= \lambda (1 - <\cN^-,\cN^+>) = \frac{1}{2\mu},\label{rhop}\\
\rho^- &=<\cX^-,\cN^->= \mu (1 - <\cN^-,\cN^+>) = \frac{1}{2\lambda} ,\label{rhom}\\
\frac{d\lambda}{\lambda}  & = \frac{<d\cN^-, N^+> }{ 1 - < \cN^+,\cN^->}, \qquad \frac{d\mu}{\mu} = \frac{<d\cN^+, N^-> }{ 1 - < \cN^+,\cN^->}, \label{dnu}
\end{align}
and we can  recover 
the frontal   $\bX: \Sigma \longrightarrow \H^3$   given by
 \begin{equation}
\bX  =  -\frac{1}{2\rho^+} (1,\cN^+ ) -  \frac{1}{2\rho^-} (1,\cN^- ) \label{eq5}
\end{equation}
whose unit normal vector $\bN$ can be written as
 \begin{equation}
\bN  = -\frac{1}{2\rho^+} (1,\cN^+ ) +  \frac{1}{2\rho^-} (1,\cN^- ). \label{eq6}
\end{equation}
Moreover, from (\ref{eq5}) and (\ref{eq6}), 
\begin{equation}\label{eqnueva}
\bX + \bN = -\frac{1}{\rho^+} (1, \cN^+), \qquad \bX - \bN = -\frac{1}{\rho^-} (1, \cN^-).
\end{equation}
Thus, the hyperbolic Gauss maps  $\cG_+$  and $\cG_-$ of $\bX$ are  $\Pi\circ\cN^+$ and $\Pi\circ \cN^-$, respectively.
\end{proof}

In the above notations we have 
\begin{lem}\label{lemafronts}
$\bX$ is a front if and only if $\cX^+$ and $\cX^-$ are fronts.
\end{lem}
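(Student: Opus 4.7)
The plan is to use the criterion that a frontal is a front iff $d\bX$ and $d\bN$ (respectively $d\cX^\pm$ and $d\cN^\pm$) share no nontrivial common kernel at any point, and to translate these kernel conditions through the explicit relations $\cX^\pm = \x - r\cN^\pm$ and $\bX\pm\bN = (r\pm s)(1,\cN^\pm)$ established in the proof of Proposition \ref{p1}. Throughout I will use that $r\pm s\neq 0$, already noted in the text.

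For the implication ($\cX^+$ and $\cX^-$ fronts) $\Rightarrow$ ($\bX$ front), I would start from $v$ with $d\bX(v) = d\bN(v) = 0$; reading off components gives $dr(v) = d\x(v) = 0$ and $ds(v) = d\n(v) = 0$. Differentiating $\bX\pm\bN = (r\pm s)(1,\cN^\pm)$ then forces $d\cN^\pm(v) = 0$. Plugging everything into $d\cX^+(v) = d\x(v) - dr(v)\cN^+ - r\,d\cN^+(v)$ yields $d\cX^+(v) = 0$, and similarly $d\cX^-(v) = 0$, so the front hypothesis on each $\cX^\pm$ forces $v = 0$.

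For the converse, assume $\bX$ is a front and take $v$ with $d\cX^+(v) = d\cN^+(v) = 0$. Differentiating $\cX^+ = \x - r\cN^+$ gives $d\x(v) = dr(v)\cN^+$, and hence $d\bX(v) = dr(v)(1,\cN^+)$. The geometric key is that $(1,\cN^+)$ is a null vector in $\L^4$, whereas $d\bX(v)$ lies in the spacelike subspace $T_\bX\H^3 = \bX^\perp$; a null vector in a spacelike subspace must vanish, so $dr(v) = 0$ and $d\bX(v) = 0$. Feeding this into $d(\bX+\bN)(v) = d(r+s)(v)(1,\cN^+)$ yields $d\bN(v) = ds(v)(1,\cN^+)$; differentiating $\|\bN\|^2 = 1$ and $\langle\bX,\bN\rangle = 0$ shows $d\bN(v) \in \bX^\perp \cap \bN^\perp$, which is a spacelike 2-plane, and the same null-in-spacelike argument forces $ds(v) = 0$ and $d\bN(v) = 0$. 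The front hypothesis on $\bX$ then yields $v = 0$, and $\cX^-$ is handled identically by swapping $+$ and $-$.

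The main obstacle I anticipate is the converse direction: the forward direction is essentially bookkeeping with the explicit formulas, but the converse relies on the Lorentzian observation that the null line spanned by $(1,\cN^\pm)$ cannot appear as a nonzero tangent vector in the relevant spacelike slices of $T\L^4$, which is what ultimately allows one to rule out both $dr(v)\neq 0$ and $ds(v)\neq 0$.
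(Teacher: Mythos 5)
Your proof is correct and is essentially the paper's argument unpacked pointwise: the paper notes that $(d\cX^\pm, d\cN^\pm)$ has the same rank as $(d\x - dr\,\cN^\pm,\, d\n - ds\,\cN^\pm)$ and concludes via the identities $<d\x - dr\,\cN^\pm, d\x - dr\,\cN^\pm> \ = \ \ll d\bX, d\bX \gg$ and $<d\n - ds\,\cN^\pm, d\n - ds\,\cN^\pm> \ = \ \ll d\bN, d\bN \gg$, and your ``null vector in a spacelike subspace must vanish'' step is precisely the positive-definiteness of the Lorentz metric on $\bX^\perp$ and $\bX^\perp\cap\bN^\perp$ that makes those identities conclusive. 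The only difference is presentational: you chase common kernels of the differentials directly, while the paper compares ranks and quadratic forms.
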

\begin{proof}
From \eqref{a1}, \eqref{a2}, \eqref{n1},  \eqref{n2} and by a straightforward computation,   the matrix $(d\cX^\pm, d\cN^\pm)$ has the same rank as  the matrix $(d\x - dr \ \cN^\pm, d\n - ds \ \cN^\pm)$. Thus, the proof follows having in mind that
\begin{align*}
& < d\x - dr \cN^\pm, d\x - dr \cN^\pm> = \ll d\bX, d\bX\gg, \\
& < d\n - ds \cN^\pm, d\n - ds \cN^\pm> = \ll d\bN, d\bN\gg.
\end{align*}
\end{proof}
\begin{defi}
{\rm We say that $\cX^+$ and $\cX^-$ are the {\sl associated} frontals (or fronts) of $\bX$.}
\end{defi}
Because the functions $\rho^+$ and $\rho^-$ are determined by  \eqref{rhop},  \eqref{rhom}  and \eqref{dnu} in terms of $\cN^+ $ and $\cN^-$,   equation \eqref{eq5}  gives a  representation formula  of any frontal in $\H^3$ in terms of their hyperbolic Gauss maps, $\cG_+= \Pi\circ\cN^+$ and $\cG_-= \Pi\circ\cN^-$. 
\\
In fact, we can prove the following  Small's representation formula for frontals in $\H^3$:
\begin{teo}\label{t0} Let $\cG_+, \cG_-: \Sigma \longrightarrow \C\cup\{\infty\}$ be  two smooth complex functions such that $\cG_+\neq \cG_-$ everywhere. Then, there exists a frontal $\bX:\Sigma \longrightarrow\H^3$  with hyperbolic Gauss maps $\cG_+$ and $\cG_-$ if and only if 
\begin{equation}
\Re \left( \int_\gamma \frac{d \cG_+}{\cG_+ - \cG_-}\right) = 0, \qquad \text{ for any loop $\gamma$ in $\Sigma$}, \label{existence}
\end{equation}
where by $\Re$ we denote the real part. Moreover, in this case we can write $\bX$ and its unit normal $\bN$ as 
 \begin{align}
\bX  &=  -\frac{1}{2\rho^+} (1,\Pi^{-1}\circ \cG_+ ) -  \frac{1}{2\rho^-} (1,\Pi^{-1}\circ \cG_- ), \label{eq5g}\\
\bN & = -\frac{1}{2\rho^+} (1,\Pi^{-1}\circ \cG_+  ) +  \frac{1}{2\rho^-} (1,\Pi^{-1}\circ \cG_-), \label{eq6g}
\end{align} where 
\begin{align} \rho^+ &= \frac{\|\xi_+\|^2}{1 + \|\cG_+\|^2}, \qquad \xi_+= c_0 \exp (\int \frac{d\cG_+}{\cG_+-\cG_-}),\label{rhoplus}\\
 \rho^- &= \frac{\|\xi_-\|^2}{1 + \|\cG_-\|^2}, \qquad \xi_-= c_1 \exp (\int \frac{d\cG_-}{\cG_--\cG_+}),\label{rhominus}
 \end{align}
 $c_0$ and $c_1$ are non zero complex numbers such that $ \rho^- \rho^+  = \cG_+ - \cG_-$.
\end{teo}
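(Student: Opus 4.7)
The idea is to reduce Theorem \ref{t0} to Proposition \ref{p1} by constructing, from the data $(\cG_+,\cG_-)$, a pair of $\R^3$-frontals $\cX^\pm$ with unit normals $\cN^\pm := \Pi^{-1}\circ\cG_\pm$ which are envelopes of a common sphere congruence and satisfy the symmetry condition \eqref{sc}. The entire argument hinges on the computational identity
\begin{equation}
\frac{\langle d\cN^-,\cN^+\rangle}{1-\langle\cN^+,\cN^-\rangle} \;=\; d\log(1+|\cG_-|^2) \;-\; 2\,\Re\frac{d\cG_-}{\cG_- - \cG_+},
\label{keyid}
\end{equation}
and its $+/-$ counterpart, which I would obtain by substituting $\Pi^{-1}(g)=\frac{1}{1+|g|^2}(g+\bar g,\,i(\bar g-g),\,|g|^2-1)$ and reducing. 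A quick consequence: since $\frac{d\cG_+}{\cG_+-\cG_-}+\frac{d\cG_-}{\cG_--\cG_+}=d\log(\cG_+-\cG_-)$ has vanishing real periods, condition \eqref{existence} is equivalent to its twin for $\cG_-$, so both $|\xi_\pm|^2$, and hence $\rho^\pm$ defined by \eqref{rhoplus}--\eqref{rhominus}, are globally single-valued precisely when \eqref{existence} holds.

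Sufficiency: assuming \eqref{existence}, choose $c_0,c_1$ so that $\xi_+\xi_-=\cG_+-\cG_-$ (i.e., $c_0c_1=1$; this is the natural reading of the normalization ``$\rho^-\rho^+=\cG_+-\cG_-$'' and is exactly what is needed for $2\rho^+\rho^-=1-\langle\cN^+,\cN^-\rangle$, i.e., $\ll\bX,\bX\gg=-1$). Then put $\lambda:=1/(2\rho^-)$, $\mu:=1/(2\rho^+)$ and define
\[
\cX^+ := \lambda(\cN^+-\cN^-),\qquad \cX^- := -\mu(\cN^+-\cN^-).
\]
Relations \eqref{eta1}--\eqref{eta2} then hold tautologically, so \eqref{sc} follows; identity \eqref{keyid} turns into \eqref{dnu}, yielding $\langle d\cX^\pm,\cN^\pm\rangle=0$ so $\cN^\pm$ are indeed the unit normals; and $\cX^+-\cX^-=(\lambda+\mu)(\cN^+-\cN^-)$ gives a common sphere centre $\x=\cX^+-(\lambda+\mu)\cN^+=\cX^--(\lambda+\mu)\cN^-$ of radius $\lambda+\mu$, exhibiting $\cX^\pm$ as envelopes. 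Proposition \ref{p1} now produces a frontal $\bX:\Sigma\to\H^3$ with associated pair $(\cX^+,\cX^-)$, and substituting $\cN^\pm=\Pi^{-1}\circ\cG_\pm$ into \eqref{eq5}--\eqref{eq6} gives \eqref{eq5g}--\eqref{eq6g}, with $\Pi\circ\cN^\pm=\cG_\pm$ as the hyperbolic Gauss maps.

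Necessity: if such a $\bX$ exists with hyperbolic Gauss maps $\cG_\pm$, Proposition \ref{p1} furnishes smooth globally defined support functions $\rho^\pm$ of the associated $\R^3$-frontals. Integrating \eqref{dnu}, identity \eqref{keyid} shows that $2\Re\int\frac{d\cG_-}{\cG_--\cG_+}$ equals $\log(1+|\cG_-|^2)-\log\rho^-$ plus a constant, hence is single-valued on $\Sigma$; by the $d\log(\cG_+-\cG_-)$ remark above, \eqref{existence} follows. The main obstacle is establishing \eqref{keyid}: after setting $\eta_\pm:=1+|\cG_\pm|^2$ and using $\eta_+\eta_-\langle\cN^+,\cN^-\rangle=\eta_+\eta_- - 2|\cG_+-\cG_-|^2$, one must track a subtle cancellation that simultaneously produces the exact $d\log\eta_-$ piece and the Riccati-type residue $-2\Re\frac{d\cG_-}{\cG_--\cG_+}$; once that is in place, the theorem follows by bookkeeping with Proposition \ref{p1}.
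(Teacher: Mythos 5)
Your proposal is correct and follows essentially the same route as the paper: reduce to Proposition \ref{p1}, establish the identities relating $\langle d\cN^\mp,\cN^\pm\rangle/(1-\langle\cN^+,\cN^-\rangle)$ to $d\log(1+|\cG_\mp|^2)-2\Re\bigl(d\cG_\mp/(\cG_\mp-\cG_\pm)\bigr)$ (these are exactly \eqref{g1}--\eqref{g3} in the paper), and observe that global single-valuedness of $\lambda,\mu$ solving \eqref{dnu} is equivalent to the vanishing real-period condition \eqref{existence}. Your extra remarks (the $d\log(\cG_+-\cG_-)$ argument equating the two period conditions, and reading the normalization as $\xi_+\xi_-=\cG_+-\cG_-$) are correct clarifications of points the paper leaves implicit.
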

\begin{proof}
It is clear from \eqref{rhop}, \eqref{rhom},  \eqref{dnu} and \eqref{eq5} that there exists $\bX$ if and only if  there exist $\lambda$ and $\mu $ satisfying \eqref{dnu}. 
\\
But having in mind that $\cG_+= \Pi\circ\cN^+$ and $\cG_-= \Pi\circ\cN^-$, it follows by a straightforward computation, that 
\begin{align}
\frac{<d\cN^+,\cN^->}{1-<\cN^-,\cN^+>} &= d\log(1 + \|\cG_+\|^2) -  \Re\left( \frac{ 2d\cG_+}{\cG_+-\cG_-}\right)\label{g1}\\
\frac{<d\cN^-,\cN^+>}{1-<\cN^-,\cN^+>} &= d\log(1 + \|\cG_-\|^2) -  \Re\left( \frac{ 2d\cG_-}{\cG_--\cG_+}\right)\label{g2}\\
1-<\cN^-,\cN^+> & = \frac{ 2 \ \| \cG_+-\cG_-\|^2}{(1 + \|\cG_+\|^2) (1 + \|\cG_-\|^2) }\label{g3}
\end{align}
and the existence of $\lambda$ and $\mu$ is equivalent to  \eqref{existence}. Moreover, using \eqref{eq5}, \eqref{eq6}, \eqref{g1},  \eqref{g2} and \eqref{g3} , $\bX$ and $\bN$ can be written as in \eqref{eq5g} and \eqref{eq6g}, where  $\rho^+$ and  $\rho^-$ are  given by \eqref{rhoplus},  and \eqref{rhominus}.
\end{proof}
\begin{pro} Let $\bX: \Sigma \longrightarrow \H^3$,
$ \bX= (r, \x)$ be a frontal  with unit normal vector  $\bN=(s,\n)$. If we denote by 
 $I$, $II$ and $III$   the first, second and third fundamental form of $\bX$ and  by $I^\pm$, $II^\pm$ and $III^\pm$ the corresponding fundamental forms of its associated frontals, $\cX^\pm$,  then the following relations hold:
\begin{align}
I & =  I^\pm+ r^2 III^\pm - 2r II^\pm,\nonumber\\
III& =  I^\pm+ s^2 III^\pm \pm 2s II^\pm,\label{eq7}\\
II& = \pm I^\pm- r s  III^\pm +(s\mp r) II^\pm.\nonumber
\end{align}
\end{pro}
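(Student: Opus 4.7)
The plan is to reduce the three identities to a single algebraic manipulation carried out separately in the $+$ and $-$ cases, reusing the key observation already made in the proof of Lemma~\ref{lemafronts}.

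First I would record the following ``projection'' identities for a frontal $\bX=(r,\x)$ in $\H^3$ with unit normal $\bN=(s,\n)$. Differentiating $\ll\bX,\bX\gg=-1$, $\ll\bN,\bN\gg=1$, $\ll\bX,\bN\gg=0$ and $\ll d\bX,\bN\gg=0$ yields $\langle\x,d\x\rangle=r\,dr$, $\langle\n,d\n\rangle=s\,ds$, $\langle\x,d\n\rangle+\langle\n,d\x\rangle=r\,ds+s\,dr$ and $\langle\n,d\x\rangle=s\,dr$, whence $\langle\x,d\n\rangle=r\,ds$. Plugging these into $\cN^{\pm}=\frac{1}{r\pm s}(\x\pm\n)$ gives the crucial relations
\begin{equation*}
\langle d\x,\cN^{\pm}\rangle=dr,\qquad \langle d\n,\cN^{\pm}\rangle=ds.
\end{equation*}
From these (combined with $\|\cN^{\pm}\|=1$) one obtains, exactly as in the proof of Lemma~\ref{lemafronts},
\begin{equation*}
I=\|d\x-dr\,\cN^{\pm}\|^{2},\quad III=\|d\n-ds\,\cN^{\pm}\|^{2},\quad II=-\ll d\bX,d\bN\gg=\langle d\x-dr\,\cN^{\pm},\,d\n-ds\,\cN^{\pm}\rangle,
\end{equation*}
the last equality coming from $\langle\cN^{\pm},d\n-ds\cN^{\pm}\rangle=\langle\cN^{\pm},d\x-dr\cN^{\pm}\rangle=0$.

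Next I would rewrite the right-hand sides in terms of $\cX^{\pm}$ and $\cN^{\pm}$. From $\cX^{\pm}=\x-r\,\cN^{\pm}$ and the equations $\bX\pm\bN=(r\pm s)(1,\cN^{\pm})$ one gets $\x=\cX^{\pm}+r\cN^{\pm}$ together with $\n=s\cN^{+}-\cX^{+}$ in the ``$+$'' case and $\n=\cX^{-}+s\cN^{-}$ in the ``$-$'' case. Differentiating and subtracting $dr\,\cN^{\pm}$ and $ds\,\cN^{\pm}$ respectively gives the clean identities
\begin{equation*}
d\x-dr\,\cN^{\pm}=d\cX^{\pm}+r\,d\cN^{\pm},\qquad d\n-ds\,\cN^{\pm}=\mp\,d\cX^{\pm}+s\,d\cN^{\pm},
\end{equation*}
where the sign flip on $d\cX^{\pm}$ in the second equation is precisely what will produce the $\pm$ in the final formulas.

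Finally I would substitute these expressions into the three quadratic forms displayed above, expand, and identify the resulting inner products with $I^{\pm}=\|d\cX^{\pm}\|^{2}$, $III^{\pm}=\|d\cN^{\pm}\|^{2}$ and $II^{\pm}=-\langle d\cX^{\pm},d\cN^{\pm}\rangle$. The first expansion gives $I=I^{\pm}-2r\,II^{\pm}+r^{2}III^{\pm}$ with no sign ambiguity; the second yields $III=I^{\pm}\pm 2s\,II^{\pm}+s^{2}III^{\pm}$; the cross term produces $II=\pm I^{\pm}+(s\mp r)II^{\pm}-rs\,III^{\pm}$. There is no real obstacle here beyond careful sign-bookkeeping: the only non-mechanical step is the derivation of the projection identities at the beginning, and even this is a short exercise in differentiating the defining relations of a hyperbolic frontal together with its unit normal.
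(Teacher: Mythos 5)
Your proposal follows essentially the same route as the paper, which merely records the relations $\x=\cX^{\pm}+r\,\cN^{\pm}$ and $\n=\mp\,\cX^{\pm}+s\,\cN^{\pm}$ and leaves the rest as a ``straightforward computation''; your projection identities $\langle d\x,\cN^{\pm}\rangle=dr$, $\langle d\n,\cN^{\pm}\rangle=ds$ and the decompositions $d\x-dr\,\cN^{\pm}=d\cX^{\pm}+r\,d\cN^{\pm}$, $d\n-ds\,\cN^{\pm}=\mp\,d\cX^{\pm}+s\,d\cN^{\pm}$ are exactly the computation the authors have in mind, and your first two conclusions are correct. One sign slip, however: since $\langle d\x-dr\,\cN^{\pm},\,d\n-ds\,\cN^{\pm}\rangle=\langle d\x,d\n\rangle-dr\,ds=\ll d\bX,d\bN\gg$, your displayed chain should read $II=-\ll d\bX,d\bN\gg=-\langle d\x-dr\,\cN^{\pm},\,d\n-ds\,\cN^{\pm}\rangle$, not $+\langle\cdot,\cdot\rangle$. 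As written, an honest expansion of your middle expression with $II^{\pm}=-\langle d\cX^{\pm},d\cN^{\pm}\rangle$ gives $\mp I^{\pm}-(s\mp r)II^{\pm}+rs\,III^{\pm}$, i.e.\ the negative of what you state, so a second, compensating sign error is hidden in your last step. With the corrected sign the expansion does produce $II=\pm I^{\pm}+(s\mp r)II^{\pm}-rs\,III^{\pm}$, and the argument is complete.
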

\begin{proof}
Using  \eqref{a1},  \eqref{a2},  \eqref{n1} and  \eqref{n2},  we have,
\begin{align*} \cX^+ + r  \cN^+ & =  \cX^- + r  \cN^-=  \x \\
 \cX^- + s \  \cN^-& = -\cX^+ + s \  \cN^+= \n
\end{align*}
and the result follows by a straightforward computation using the definition of the corresponding fundamental form.  
\end{proof}
Moreover, from (\ref{eq7}), it is also easy to prove,
\begin{cor} At the non singular points the following relations hold:
\begin{equation}
K^\pm= \frac{1 \mp 2 H + K_e}{s^2 + 2H r s + K_e r^2}, \qquad 
H^\pm= \frac{ H(s \mp r) + r K_e \mp s}{s^2 + 2H r s + K_er^2}, \label{H1K1}
\end{equation}
where $H$, $K_e$, $H^\pm$ and $K^\pm$, are the mean curvature, the extrinsic curvature of $\bX$, the mean curvature and the Gauss curvature of $\cX^\pm$, respectively.
\end{cor}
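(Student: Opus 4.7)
My plan is to rewrite the three identities in \eqref{eq7} as a single operator identity between the shape operators of $\bX$ and $\cX^\pm$, and then read off $K^\pm$ and $H^\pm$ as the elementary symmetric functions of the principal curvatures.

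At a nonsingular point, let $S^\pm=(I^\pm)^{-1}II^\pm$ be the shape operator of $\cX^\pm$. Using the standard identities $II^\pm=I^\pm S^\pm$ and $III^\pm=I^\pm(S^\pm)^2$, the three relations in \eqref{eq7} factor as
\begin{align*}
I&=I^\pm(\mathrm{Id}-rS^\pm)^2,\\
III&=I^\pm(\mathrm{Id}\pm sS^\pm)^2,\\
II&=\pm\,I^\pm(\mathrm{Id}-rS^\pm)(\mathrm{Id}\pm sS^\pm),
\end{align*}
since in each right-hand side the second factor is a polynomial in $S^\pm$ and the matrix products are unambiguous. Checking this is purely a matter of expanding $(1-rS^\pm)^2$, $(1\pm sS^\pm)^2$, and $(1-rS^\pm)(1\pm sS^\pm)$ and matching coefficients.

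From these factorizations, the shape operator $S=I^{-1}II$ of $\bX$ simplifies to $S=\pm(\mathrm{Id}-rS^\pm)^{-1}(\mathrm{Id}\pm sS^\pm)$, since $I^\pm$ cancels and $(\mathrm{Id}-rS^\pm)^2$ is invertible at nonsingular points. Being a rational function of $S^\pm$, the operator $S$ commutes with $S^\pm$ and shares its eigenvectors, so principal directions correspond pointwise. Writing the principal curvatures of $\bX$ and $\cX^\pm$ as $\lambda_i$ and $\kappa_i^\pm$, the eigenvalue relation $\lambda_i=\pm(1\pm s\kappa_i^\pm)/(1-r\kappa_i^\pm)$ inverts cleanly to $\kappa_i^\pm=(\lambda_i\mp 1)/(r\lambda_i+s)$.

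Finally, I will compute $K^\pm=\kappa_1^\pm\kappa_2^\pm$ and $2H^\pm=\kappa_1^\pm+\kappa_2^\pm$ by expanding numerator and denominator in terms of $2H=\lambda_1+\lambda_2$ and $K_e=\lambda_1\lambda_2$. The common denominator $(r\lambda_1+s)(r\lambda_2+s)=s^2+2Hrs+K_er^2$ is immediate, the product of numerators yields $(\lambda_1\mp 1)(\lambda_2\mp 1)=1\mp 2H+K_e$, and the sum (after clearing the common denominator) gives $2\bigl(H(s\mp r)+rK_e\mp s\bigr)$, producing exactly the stated formulas for $K^\pm$ and $H^\pm$. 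The only real obstacle is keeping the $\pm/\mp$ bookkeeping straight across the two cases; the rest is routine two-by-two linear algebra.
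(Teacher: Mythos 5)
Your proof is correct, and all the sign bookkeeping checks out: the factorizations $I=I^\pm(\mathrm{Id}-rS^\pm)^2$, $III=I^\pm(\mathrm{Id}\pm sS^\pm)^2$, $II=\pm I^\pm(\mathrm{Id}-rS^\pm)(\mathrm{Id}\pm sS^\pm)$ reproduce \eqref{eq7}, the eigenvalue inversion $\kappa_i^\pm=(\lambda_i\mp1)/(r\lambda_i+s)$ is right, and the symmetric functions give exactly \eqref{H1K1}. The paper offers no argument for this corollary beyond ``it is easy to prove from \eqref{eq7},'' and your shape-operator computation is precisely the routine verification being left to the reader.
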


The support function $\rho^+$ of  $\cX^+$ will play an important role in our geometric construction. We now establish some useful expressions relating $\rho^+$ with the geometry of the frontal  $\bX$.

At the points where $\cN^+$ is  an immersion, we can write
\begin{equation} \cX^+ := \hat{\nabla}^+ \rho^+ + \rho^+ \cN^+, \label{rho1}
\end{equation}
where by $\hat{\nabla}^+$ we denote the gradient with respect to the spherical metric $III^+ $.

If $z=u+ iv$ is a local conformal complex parameter for $III^+$ we  may write 
$$ III^+ = 2<\cN^+_z,\cN^+_{\bar z}> |dz|^2$$
and from (\ref{rho1}),
\begin{equation}-\frac{2 H^+}{K^+} = 2\frac{<\cX^+_z,\cN^+_{\bar z}>}{<\cN^+_z,\cN^+_{\bar z}>}= \hat{\Delta}^+ \rho^+ + 2 \rho^+,\label{hk}\end{equation}
where $\hat{\Delta}^+$ is the Laplace operator respect to $III^+$.
 \begin{teo}\label{wein}
Let $\bX:\Sigma\longrightarrow \H^3$ be a frontal  with unit normal $\bN$ and 
$$\cX^+ = \hat{\nabla}^+ \rho^+ + \rho^+ \cN^+$$ one of its associate  frontals in $\R^3$. Then, at the points where $\cN^+$ is an immersion, we have
$$ 2 \Theta (H-1) + (1-\Theta)K_I = 0,$$
where 
\begin{equation} \Theta=(\rho^+)^2+ \rho^+ \hat{\Delta}^+\rho^+ -|\hat{\nabla}^+ \rho^+|^2.\label{curvature}
\end{equation}
and $H$ and $K_I$ are the mean curvature and the Gauss curvature of $\bX$.
\end{teo}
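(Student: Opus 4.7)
The plan is to reduce the Weingarten-type relation to a clean identity between $\Theta$ and the ambient curvatures $H,K_e$ of $\bX$, and then invoke the Gauss equation $K_I = K_e - 1$ (valid since the sectional curvature of $\H^3$ is $-1$).

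First I would write $\cX^+$ and $\cN^+$ in closed form via \eqref{a1} and \eqref{n1}. The identities $\ll\bX,\bX\gg = -1$, $\ll\bN,\bN\gg = 1$, $\ll\bX,\bN\gg = 0$ translate to $|\x|^2 = r^2 - 1$, $|\n|^2 = 1 + s^2$, and $\langle\x,\n\rangle = rs$. A short calculation then gives the two key identities
\[
\rho^+ = \langle \cX^+,\cN^+\rangle = -\frac{1}{r+s}, \qquad |\cX^+|^2 = \frac{r-s}{r+s}.
\]
Since $\cX^+ = \hat{\nabla}^+\rho^+ + \rho^+ \cN^+$ with $\hat{\nabla}^+\rho^+ \perp \cN^+$, this immediately yields $|\hat{\nabla}^+\rho^+|^2 = |\cX^+|^2 - (\rho^+)^2 = (r^2 - s^2 - 1)/(r+s)^2$ for free.

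Next I would use \eqref{hk} to get $\hat{\Delta}^+\rho^+ = -2H^+/K^+ - 2\rho^+$, and then \eqref{H1K1} to express $H^+/K^+$ in terms of $H, K_e, r, s$. Plugging everything into the definition $\Theta = (\rho^+)^2 + \rho^+\hat{\Delta}^+\rho^+ - |\hat{\nabla}^+\rho^+|^2$, bringing it over the common denominator $(r+s)(1 - 2H + K_e)$, and simplifying, the terms linear in $H$ cancel and the remaining numerator reduces to $(r+s)(K_e - 1)$. Hence
\[
\Theta = \frac{K_e - 1}{1 - 2H + K_e} = \frac{K_I}{2(1-H) + K_I},
\]
which rearranges immediately to $2\Theta(H-1) + (1-\Theta)K_I = 0$.

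The main obstacle is the algebraic collapse in the last step: one must push through a computation with three different denominators and verify that the cross terms involving $H$ really do vanish, leaving only a factor of $(K_e-1)$ in the numerator. That cancellation is the entire content of the theorem, since once it is in hand the passage to $K_I$ via the Gauss equation is just a cosmetic rearrangement.
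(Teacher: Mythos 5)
Your proposal is correct and follows essentially the same route as the paper: both reduce everything to the identity $\Theta = (K_e-1)/(1-2H+K_e)$ via \eqref{hk} and \eqref{H1K1}, the only cosmetic difference being that you rederive $\rho^+=-1/(r+s)$ and $\|\cX^+\|^2=(r-s)/(r+s)$ directly from \eqref{a1}, \eqref{n1} and the Lorentzian constraints, whereas the paper reads the equivalent facts $r+s=-1/\rho^+$, $r-s=-1/\rho^-$ off the relations already established in Proposition \ref{p1}. The algebraic cancellation you flag as the main obstacle does go through, leaving the numerator $(r+s)(K_e-1)$ as claimed.
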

\begin{proof} From \eqref{eta2},  \eqref{rhop}, \eqref{rhom}, \eqref{eq5} and \eqref{eq6}, we have that 
\begin{equation}
r + s = -\frac{1}{\rho^+},  \qquad  r-s = -\frac{1}{\rho^-}= -\frac{(\rho^+)^2 + |\hat{\nabla}^+ \rho^+|^2}{\rho^+}\label{r+s}
\end{equation}
From \eqref{H1K1}, \eqref{hk} and \eqref{r+s},
\begin{eqnarray*}\Theta &=& \rho^+ \hat{\Delta}^+ \rho^+ +  (\rho^+)^2 -|\hat{\nabla}^+ \rho^+|^2 = -2 \rho^+\frac{H^+}{K^+} - ( |\hat{\nabla}^+ \rho^+|^2 + (\rho^+)^2) = \\
&= & \frac{K_e-1}{1 - 2 H + K_e}.
\end{eqnarray*}
or equivalently
$$ 2 \Theta (H-1) + (1-\Theta)(K_e-1)=0,$$
which concludes the proof.
\end{proof}
\begin{remark} \label{r2}{\rm Equation \eqref{curvature} means that the metric
$$ \frac{1}{(\rho^+)^2} III^+$$
has  curvature $\lambda$.}
\end{remark}

\subsection{Ribaucour transformations}

In the previous subsection we  related a  frontal  in $\mathbb{H}^3$ with the two envelopes of a congruence of spheres in $\R^3$. Our aim is to show that if we start with a flat front in $\H^3$, then we can transform the induced congruence of spheres into another congruence of spheres having minimal surfaces as envelopes. 

With this goal in mind, we will now briefly recall some material from the classical theory of Ribaucour transformations. Such transformations are closely related to the theory of cyclic systems, developed by Bianchi in  \cite{Bi}. More information about this kind of transformations can be found in \cite{Bi, DFT, CFT, CFT2, LRTT,LT}.

\
 
 We  start with some general facts. For the proofs and more details, see \cite{Bi, CFT}.

Let $\widetilde{M}$ and $M$ be oriented surfaces in $\mathbb{R}^3$ and  denote by $\widetilde{{N}}$ and ${N}$ their  respective Gauss maps. 
\begin{defi}\label{d1} We say $\widetilde{M}$  is obtained from $M$ by a Ribaucour transformation if there is a smooth function $\tau:M \longrightarrow \R$ and a diffeomorphism ${\cal H}:M \longrightarrow \widetilde{M}$ satisfying
\begin{itemize}
\item $p + \tau(p) {N}(p) = {\cal H}(p) + \tau(p) \widetilde{{N}}({\cal H}(p))$, for all $p\in M$.
\item $\{p + \tau(p) {N}(p) : \ p\in M\}$ is a two-dimensional manifold.
\item ${\cal H}$ preserves lines of curvature.
\end{itemize}
\end{defi}

 \begin{pro}[\cite{Bi, CFT}] \label{sc} If $M$ is a simply-connected surface in $\mathbb{R}^3$, which admits orthogonal principal
direction vector fields, then  $\widetilde{M}$ is obtained   from $M$ by a Ribaucour transformation if and only if there exists a regular function $\tau$, $\tau=-\phi/\rho$  where   $\phi$  and $\rho$  are solutions of the following system of differential equations.
 \begin{align}
d\rho & = < \nabla^m \phi,d{\cal N}>, \label{rho}
\end{align}
and where by $\nabla^m$ we denote  the gradient with respect to the first fundamental form $I^m$ of $M$.

Using the functions $\rho$ and $\phi$,  we have that the following relations hold, 
\begin{align}
\widetilde{Z} & = Z - \frac{2 \phi}{\| \nabla^m \phi\|^2 + \rho^2}(  \nabla^m \phi + \rho N ), \label{ribt1}\\ 
\widetilde{N} & = N - \frac{2 \rho}{\| \nabla^m \phi\|^2 + \rho^2}(  \nabla^m \phi + \rho N ). \label{ribt2}
\end{align}
where by  $\widetilde{Z}$ and $Z$ we denote parametrizations of $\widetilde{M}$ and $M$, respectively.
\end{pro}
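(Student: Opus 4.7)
The plan is to organize everything around the congruence of spheres whose two envelopes are $M$ and $\widetilde M$. Writing $W(p)=p+\tau(p)N(p)$ for the center and $|\tau|$ for the radius, the first item of Definition \ref{d1} amounts to $W=\cH(p)+\tau\widetilde N\circ\cH$, so that $\widetilde Z-Z=\tau(N-\widetilde N)$ and $\widetilde N$ is the reflection of $N$ across the tangent plane of the middle sheet $W(M)$; the second item is the immersion condition for $W$, and the third is the Ribaucour (line-of-curvature preserving) condition. Setting $\tau=-\phi/\rho$ and defining the auxiliary vector field $V=\nabla^{m}\phi+\rho\,N$ (which is a typical section of $TM\oplus\R N$ built from the two unknowns), one checks at once that the formulas \eqref{ribt1}--\eqref{ribt2} can be rewritten as
\[
\widetilde Z=Z+\tau(N-\widetilde N),\qquad \widetilde N=N-\frac{2\,\langle N,V\rangle}{\|V\|^{2}}V,
\]
i.e.\ $\widetilde N$ is exactly the reflection of $N$ across $V^{\perp}$; in particular $\|\widetilde N\|=1$ comes for free.

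For the \emph{sufficiency} direction, I would assume $\phi,\rho$ satisfy \eqref{rho} and verify the three items of Definition \ref{d1}. The key computation is $\langle d\widetilde Z,\widetilde N\rangle=0$: differentiating \eqref{ribt1} and grouping terms, the coefficients collapse to a multiple of $d\rho-\langle \nabla^{m}\phi,dN\rangle$, which vanishes by \eqref{rho}. This both shows that $\widetilde N$ is a unit normal to $\widetilde Z$ (so $\widetilde Z$ is the second envelope) and, combined with the manifest equality $\widetilde Z+\tau\widetilde N=Z+\tau N=W$, yields item (i). Item (ii), that $W$ is a surface, I would establish as a generic transversality statement under the regularity assumption on $\tau$. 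For item (iii), I would pass to an orthogonal curvature-line chart $(u^{1},u^{2})$ with $I^{m}=a_{i}^{2}(du^{i})^{2}$ and $N_{,i}=-k_{i}Z_{,i}$; in these coordinates \eqref{rho} reads $\rho_{,i}=-k_{i}\phi_{,i}$, which is exactly the identity needed to diagonalize the shape operator of $\widetilde Z$ with respect to the same coordinate vector fields: the off-diagonal entries of $d\widetilde N$ along $\widetilde Z_{,j}$ reduce to $\phi_{,i}k_{i}+\rho_{,i}=0$.

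For the \emph{necessity} direction, I would start from a Ribaucour transformation with transfer function $\tau$ and \emph{define} $\rho$ and $\phi$ so as to recover the ansatz. Writing $\widetilde Z-Z=\tau(N-\widetilde N)$ and decomposing the direction $N-\widetilde N=V/\|V\|$ of this vector along $TM\oplus \R N$ in the form $V=T+\rho N$ with $T\in TM$, one chooses $\rho$ uniformly (up to overall scale) and then defines $\phi$ so that $T=\nabla^{m}\phi$; the preservation of lines of curvature together with the simple-connectedness of $M$ is exactly what makes $T$ the gradient of a well-defined $\phi$ (the mixed partials match because principal coordinates trivialize the Codazzi structure). Once $\phi,\rho$ are in hand, $\tau=-\phi/\rho$ is forced by $\|V\|^{2}=\|\nabla^{m}\phi\|^{2}+\rho^{2}$, and the condition that $\widetilde N$ be the unit normal of the resulting $\widetilde Z$ is equivalent to \eqref{rho}, by the same computation used in the sufficiency direction.

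The main obstacle I expect is the line-of-curvature bookkeeping in the sufficiency direction: showing that in principal coordinates on $M$ the shape operator of $\widetilde Z$ is simultaneously diagonalized by $\widetilde Z_{,1},\widetilde Z_{,2}$ requires tracking several cancellations, all of which rely on the precise form of \eqref{rho}. A secondary technical point is in the necessity direction, where one must use simple-connectedness to upgrade the closed 1-form $\langle V,\cdot\rangle_{TM}$ to an exact one, producing the global potential $\phi$; without this hypothesis only a local version of \eqref{rho} would follow.
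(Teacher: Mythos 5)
The paper does not prove this proposition: it is quoted from the literature (Bianchi and Corro--Ferreira--Tenenblat) with an explicit pointer to those references for the proofs, so there is no in-paper argument to compare against. Your sketch reproduces the standard argument from those sources --- the sphere congruence with middle sheet $W=Z+\tau N$, the observation that $\widetilde N$ is the reflection of $N$ across $V^\perp$ with $V=\nabla^m\phi+\rho N$, the cancellation $\langle dV,N\rangle=d\rho-\langle\nabla^m\phi,dN\rangle=0$ driving $\langle d\widetilde Z,\widetilde N\rangle=0$, the identity $\rho_{,i}=-k_i\phi_{,i}$ in principal coordinates for the curvature-line preservation, and simple-connectedness to globalize the potential $\phi$ in the converse --- and each of these steps checks out. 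Two small points you should tighten if you write it out in full: for item (iii) you must verify that \emph{both} fundamental forms of $\widetilde Z$ are diagonal in the principal chart of $Z$ (diagonality of $\widetilde{II}$ alone, which is what the identity $\phi_{,i}k_i+\rho_{,i}=0$ most directly gives, does not by itself make the coordinate directions principal), and item (ii) should be tied to the stated regularity of $\tau$ rather than dismissed as generic transversality.
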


\

The existence of the above  functions has been essential in the study of classical Ribaucour transformations. With the aim of extending this study to either surfaces of non trivial topology or to surfaces admitting some kind of singularities, we give the following definition:
\begin{defi}{\rm 
Let $\cZ, \widetilde{\cZ}:\Sigma \longrightarrow \R^3$ be two frontals with unit normals  ${\cal N}$  and  $\widetilde{{\cal N}}$, respectively,  and such that $\cN \neq \widetilde{{\cal N}}$ everywhere.  We say that the pair $(\cZ, \widetilde{\cZ})$ is {\sl Ribaucour integrable}  if and only if 
\begin{itemize}
\item $\cZ$ and $\widetilde{\cZ}$  are envelopes to a congruence of spheres, 
\item and
\begin{equation}
\Re \left( \int_\gamma \ \frac{d\widetilde{\cG}}{\widetilde{\cG} - \cG}\right) = 0, \qquad \text{\rm for any loop $\gamma$ in $\Sigma$},  \label{RP}
\end{equation}
where $\Pi\circ\cN = \cG$ and $\Pi\circ \widetilde{\cN}=\widetilde{\cG}$.
\end{itemize}}
\end{defi}
We have the following characterization result:
\begin{teo} \label{t02}  
Let $\cZ, \widetilde{\cZ}:\Sigma \longrightarrow \R^3$ be  frontals with unit normals  ${\cal N}$  and  $\widetilde{{\cal N}}$, respectively. 
Then   $(\cZ, \widetilde{\cZ})$ is  Ribaucour integrable  if and only if  there exist a frontal $\cX_\cZ:\Sigma  \longrightarrow \R^3$ with the same unit normal $\cN$ as $\cZ$ and a regular function $\tau$, $\tau=-\phi/\rho$, where   $\phi$  and $\rho$  are smooth functions satisfying 
\begin{align}
\rho =  <\cX_\cZ,\cN>, \qquad
 d\rho  =  < \cX_\cZ, d\cN>, \qquad 
 d\phi = < \cX_\cZ, d\cZ> \label{ribeq}
\end{align}
and such  that  the following relations hold,
\begin{align}
\widetilde{\cZ} & = \cZ - \frac{2 \phi}{\| \cX_\cZ\|^2 }\cX_\cZ ,\label{ribeq1}\\
\widetilde{\cN} & = \cN - \frac{2 \rho}{\|\cX_\cZ\|^2 }\cX_\cZ.\label{ribeq2}
\end{align}
 \end{teo}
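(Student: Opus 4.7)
The plan is to exploit the parallelism with Proposition \ref{p1}: the congruence-of-spheres structure forces $\widetilde{\cZ}-\cZ$ to be collinear with $\cN-\widetilde{\cN}$, while the formulas \eqref{ribeq1}--\eqref{ribeq2} say that both of these differences are scalar multiples of the auxiliary frontal $\cX_\cZ$. Hence $\cX_\cZ$ must itself be proportional to $\cN-\widetilde{\cN}$, and the whole theorem reduces to recognizing that the scalar factor is governed by the same ODE encountered in Proposition \ref{p1}, so that condition \eqref{RP} is precisely the obstruction to its global existence.

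For the direct implication, I would begin from the envelope condition, which uniquely defines a smooth radius $\tau$ by $\widetilde{\cZ}-\cZ=\tau(\cN-\widetilde{\cN})$, and seek $\cX_\cZ$ in the form $\lambda(\cN-\widetilde{\cN})$. Imposing that $\cX_\cZ$ be a frontal with unit normal $\cN$, that is $\langle d\cX_\cZ,\cN\rangle=0$, translates into
\[
d\log\lambda \;=\; \frac{\langle \cN, d\widetilde{\cN}\rangle}{1-\langle \cN,\widetilde{\cN}\rangle},
\]
which via the identities \eqref{g1}--\eqref{g3} is exactly the ODE whose integrability was shown to be equivalent to \eqref{RP} in the proof of Theorem \ref{t0}. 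Setting $\rho:=\langle \cX_\cZ,\cN\rangle$, the first two equations of \eqref{ribeq} are automatic from the frontal property, and defining $\phi:=-\tau\rho$ one then verifies the third. Since $\|\cX_\cZ\|^2=2\lambda\rho$, the reflection formulas \eqref{ribeq1}--\eqref{ribeq2} become algebraic consequences of $\cX_\cZ=\lambda(\cN-\widetilde{\cN})$ and $\widetilde{\cZ}-\cZ=(\tau/\lambda)\cX_\cZ$.

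For the converse, I would read \eqref{ribeq1}--\eqref{ribeq2} as the definitions of $\widetilde{\cZ}$ and $\widetilde{\cN}$. The identity $\|\widetilde{\cN}\|=1$ is a one-line check, and with $\tau=-\phi/\rho$ the relation $\cZ+\tau\cN = \widetilde{\cZ}+\tau\widetilde{\cN}$ is immediate, producing the congruence of spheres. To confirm that $\widetilde{\cN}$ is the unit normal of $\widetilde{\cZ}$, I would differentiate \eqref{ribeq1} and pair with $\widetilde{\cN}$, using the three equations of \eqref{ribeq} to collapse the remaining terms. Finally, running the identities \eqref{g1}--\eqref{g3} in the reverse direction shows that the existence of $\rho$ satisfying $d\rho=\langle \cX_\cZ,d\cN\rangle$ is equivalent to the loop condition \eqref{RP}.

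The hard part, in my view, is the verification of $d\phi=\langle \cX_\cZ,d\cZ\rangle$ in the direct implication: the right-hand side is not manifestly exact, and its exactness only emerges after simultaneously invoking the envelope decomposition $\widetilde{\cZ}=\cZ+(\tau/\lambda)\cX_\cZ$, the normality $\langle \widetilde{\cN},d\widetilde{\cZ}\rangle=0$, the algebraic fact $\langle \widetilde{\cN},\cX_\cZ\rangle=-\rho$, and the already-solved $\lambda$-equation. The resulting telescoping leaves precisely $-d(\tau\rho)=d\phi$, which is the linchpin of the whole argument.
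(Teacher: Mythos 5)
Your proposal is correct and follows essentially the same route as the paper: the ansatz $\cX_\cZ=\lambda(\cN-\widetilde{\cN})$ with $d\log\lambda=\langle\cN,d\widetilde{\cN}\rangle/(1-\langle\cN,\widetilde{\cN}\rangle)$, whose global solvability is equated with \eqref{RP} via \eqref{g1}, the choice $\phi=-\tau\rho$, and the telescoping through the sphere centers $\cC=\cZ+\tau\cN=\widetilde{\cZ}+\tau\widetilde{\cN}$ to get $d\phi=\langle\cX_\cZ,d\cZ\rangle$. The only cosmetic difference is that you verify $\langle\widetilde{\cN},d\widetilde{\cZ}\rangle=0$ in the converse, which is already part of the hypotheses.
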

 \begin{proof}
 First, we observe from \eqref{g1} that \eqref{RP} holds if and only if there exists a well defined regular function $\lambda:\Sigma\longrightarrow \R^+$
 $$\log( \lambda) (p) = \int^p_{p_0}\frac{< d\widetilde{\cN},\cN>}{1 - <N,\widetilde{\cN}>}$$
 such that $ \cX_\cZ= \lambda(\cN - \widetilde{\cN})$ is a frontal with unit normal $\cN$.
\\ 
Thus, if $\cZ$ and $\widetilde{\cZ}$ are envelopes to a congruence of spheres of centers $$\cC:= \cZ + \tau  \cN =  \widetilde{\cZ} + \tau \widetilde{{\cal N}}$$ and radius $\tau$ and $(\cZ, \widetilde{\cZ})$ is Ribaucour integrable, 
we have the existence of $\cX_\cZ$. 
Take $\rho=<\cX_\cZ,\cN>$ the support function of $\cX_\cZ$ and consider $\phi = - \tau \rho$,  then we have that 
$$ \rho =<\cX_\cZ,\cN>, \qquad d\rho = <\cX_\cZ,d\cN>$$
and 
\begin{align*}  d\phi &= - d\tau < \cX_\cZ,\cN> - \tau < \cX_\cZ, d\cN> = -< \cX_\cZ,d(\tau\cN)>,
\\
& = -<\cX_\cZ,d\cC> + <\cX_\cZ,d\cZ> = <\cX_\cZ,d\cZ> ,
\end{align*}
which proves  \eqref{ribeq}.
\\
Moreover, $ (1 - < N, \widetilde{\cN} >)  \lambda  =\rho$ and $ 2 \rho^2 = \| \cX_\cZ\|^2 (1 - < N, \widetilde{\cN} >)$. Thus
$$   \cN - \widetilde{\cN} = \frac{1 - < N, \widetilde{\cN} >}{\rho} \cX_\cZ = \frac{2\rho}{\| \cX_\cZ\|^2} \cX_\cZ,$$ 
and
$$ \widetilde{\cZ}- \cZ= -\frac{\phi}{\rho}( \cN - \widetilde{\cN} ) =  - \frac{2 \phi}{\| \cX_\cZ\|^2 }\cX_\cZ.$$
which gives \eqref{ribeq1} and \eqref{ribeq2}.
\\
The converse is clear from  \eqref{ribeq}, \eqref{ribeq1}, \eqref{ribeq2} and the observation made at the beginning of the proof.
 \end{proof} 
 \begin{defi}{\rm  The above  $(\rho, \phi)$ are called  {\sl Ribaucour data} of the Ribaucour integrable  pair $(\cZ,\widetilde{\cZ}) $.}
\end{defi}
\begin{remark}\label{rk01}{\rm Observe that if $\cZ$ is a front and $(\rho, \phi)$  are Ribaucour data of $(\cZ, \widetilde{\cZ})$, then $\cX_\cZ$ is uniquely determined by $\rho$ and $\phi$. Moreover, in the particular case that  $\cZ$ is an immersion one also has, from \eqref{ribeq},  that 
\begin{equation} 
\label{assorib}
\cX_\cZ = \nabla^m \phi + \rho \cN,
\end{equation}
where $\nabla^m$ denotes the gradient respect to the first fundamental form of $\cZ$. }
\end{remark}
 \begin{remark}\label{rk03}{\rm
From Definition \ref{d1} and Proposition \ref{sc}, if  $\Sigma$ is simply connected and $\cZ:\Sigma \longrightarrow \R^3$ is an  immersion without umbilic points, then  $\widetilde{\cZ}:\Sigma \longrightarrow \R^3$ is obtained from $\cZ$ by a Ribaucour transformation if and only if  the pair $(\cZ, \widetilde{\cZ})$ is  Ribaucour integrable.}
 \end{remark}
\begin{remark}\label{rk1}{\rm
From   \eqref{ribeq},   \eqref{ribeq1},  \eqref{ribeq2} and \eqref{assorib}, if $(\rho,\phi)$ are   Ribaucour data of  a Ribaucour integrable pair $(\cZ,\widetilde{\cZ})$ , then for any real constant $k\neq 0$,  $(k \rho, k \phi)$ are also Ribaucour data  of the same pair $(\cZ,\widetilde{\cZ})$.} 
\end{remark}
\begin{lem}\label{lema1}
If ($\cZ$, $\widetilde{\cZ}$) is a  Ribaucour integrable pair of frontals with  Ribaucour data $(\rho,\phi)$, then ( $\widetilde{\cZ}$, $\cZ$) is also Ribaucour integrable and   Ribaucour data $(\widetilde{\rho}, \widetilde{\phi}) $  of ( $\widetilde{\cZ}$, $\cZ$) are given by
\begin{align}
&\widetilde{\rho} =  \frac{\rho}{\|\cX_\cZ\|^2 }, 
&\widetilde{\phi}  = \frac{\phi}{\| \cX_\cZ\|^2}. \label{itrans}
\end{align}
\end{lem}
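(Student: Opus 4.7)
The strategy is to exploit the evident symmetry of the envelope condition, reduce the monodromy condition \eqref{RP} to something manifestly symmetric, and then verify the claimed formulas by explicit substitution.

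First I would observe that the ``envelopes of a congruence of spheres'' part of the definition is symmetric in $\cZ$ and $\widetilde{\cZ}$, so only the loop integral condition needs attention. For that, I would use the identity
\[
\frac{d\widetilde{\cG}}{\widetilde{\cG}-\cG} + \frac{d\cG}{\cG-\widetilde{\cG}} \;=\; d\log(\cG-\widetilde{\cG}),
\]
which is a well defined $\mathbb{C}$-valued $1$-form because $\cN\neq\widetilde{\cN}$ forces $\cG\neq\widetilde{\cG}$. Its real part equals the globally exact form $d\log|\cG-\widetilde{\cG}|$, so its integral around any loop vanishes. Hence $\Re\int_\gamma d\widetilde{\cG}/(\widetilde{\cG}-\cG)=0$ is equivalent to $\Re\int_\gamma d\cG/(\cG-\widetilde{\cG})=0$, and Ribaucour integrability is symmetric in the pair.

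Next I would identify the correct associated frontal for the swapped pair. From \eqref{ribeq2} we have $\cN-\widetilde{\cN}=(2\rho/\|\cX_\cZ\|^2)\cX_\cZ$, so any frontal with unit normal $\widetilde{\cN}$ produced as in the proof of Theorem \ref{t02} is proportional to $\cX_\cZ$; the right normalization turns out to be
\[
\cX_{\widetilde{\cZ}} \;:=\; -\frac{\cX_\cZ}{\|\cX_\cZ\|^2}, \qquad \|\cX_{\widetilde{\cZ}}\|^2 \;=\; \frac{1}{\|\cX_\cZ\|^2}.
\]
To check $\widetilde{\rho}=\langle \cX_{\widetilde{\cZ}},\widetilde{\cN}\rangle$, pair \eqref{ribeq2} with $\cX_\cZ$ and use $\langle \cX_\cZ,\cN\rangle=\rho$ to obtain $\langle \cX_\cZ,\widetilde{\cN}\rangle=\rho-2\rho=-\rho$; hence $\langle \cX_{\widetilde{\cZ}},\widetilde{\cN}\rangle=\rho/\|\cX_\cZ\|^2=\widetilde{\rho}$.

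The remaining two data equations $d\widetilde{\rho}=\langle \cX_{\widetilde{\cZ}},d\widetilde{\cN}\rangle$ and $d\widetilde{\phi}=\langle \cX_{\widetilde{\cZ}},d\widetilde{\cZ}\rangle$ follow by expanding via the quotient rule, differentiating \eqref{ribeq1} and \eqref{ribeq2}, and invoking the original data equations \eqref{ribeq} together with $\langle \cX_\cZ,d\cX_\cZ\rangle=\tfrac12 d\|\cX_\cZ\|^2$; the cancellations are entirely algebraic. Finally I would verify the swapped \eqref{ribeq1}, \eqref{ribeq2} for the pair $(\widetilde{\cZ},\cZ)$: since $2\widetilde{\phi}/\|\cX_{\widetilde{\cZ}}\|^2=2\phi$ and $2\widetilde{\rho}/\|\cX_{\widetilde{\cZ}}\|^2=2\rho$, substituting $\cX_{\widetilde{\cZ}}=-\cX_\cZ/\|\cX_\cZ\|^2$ reduces them to \eqref{ribeq1} and \eqref{ribeq2} solved for $\cZ$ and $\cN$. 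I do not expect any serious obstacle; the only conceptual ingredient is the exact-form argument that makes the monodromy condition symmetric, and everything else is bookkeeping dictated by the formulas.
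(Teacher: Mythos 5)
Your proof is correct and follows essentially the same strategy as the paper: exhibit the associated frontal of the reversed pair as the reciprocal-norm rescaling of $\cX_\cZ$ and verify the data equations of Theorem \ref{t02} together with the inverted transformation formulas by direct computation. Two points are worth noting. First, you add a self-contained verification that the loop condition \eqref{RP} is symmetric, via the exactness of $\Re\, d\log(\cG-\widetilde{\cG})=d\log|\cG-\widetilde{\cG}|$; the paper instead leaves this to the ``if and only if'' of Theorem \ref{t02}, so your observation is a small but genuine addition (modulo the points where $\cG$ or $\widetilde{\cG}$ passes through $\infty$, which the paper also glosses over). Second, your sign $\cX_{\widetilde{\cZ}}=-\cX_\cZ/\|\cX_\cZ\|^2$ is the correct one: since $\langle\cX_\cZ,\widetilde{\cN}\rangle=-\rho$, the paper's displayed choice $\widetilde{\cX}_{\widetilde{\cZ}}=\tfrac{\widetilde{\rho}}{\rho}\cX_\cZ$ in \eqref{relationassociated} gives $\langle\widetilde{\cX}_{\widetilde{\cZ}},\widetilde{\cN}\rangle=-\widetilde{\rho}$ and is a sign slip, as one can also see from the formula $\cX^-=\tfrac{1}{2\rho}(-\cN+\widetilde{\cN})$ used later in Proposition \ref{p3}. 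Your remaining ``bookkeeping'' does go through exactly as you indicate, using $\langle\cX_\cZ,d\cX_\cZ\rangle=\tfrac12 d\|\cX_\cZ\|^2$.
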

\begin{proof}
If we take  $\widetilde{\rho}$ and $\widetilde{\phi} $ as in \eqref{itrans} and consider 
\begin{equation} \widetilde{\cX}_{\widetilde{\cZ}} \ = \ \frac{\widetilde{\rho}}{\rho}  \  \cX_\cZ, \label{relationassociated}\end{equation}
then from \eqref{ribeq},  \eqref{ribeq1},  \eqref{ribeq2} and by a straightforward computation, we see that the following relations hold:
\begin{align*}
\widetilde{\rho} =  <\widetilde{\cX}_{\widetilde{\cZ}},\widetilde{\cN}>, \qquad
 d\widetilde{\rho}  =  < \widetilde{\cX}_{\widetilde{\cZ}}, d\widetilde{\cN}>, \qquad 
 d\widetilde{\phi} = < \widetilde{\cX}_{\widetilde{\cZ}}, d\widetilde{\cZ}> 
\end{align*}
and 
\begin{align*}
 \cZ & =  \widetilde{\cZ}  - \frac{2\widetilde{ \phi}}{\|\widetilde{\cX}_{\widetilde{\cZ}}\|^2 }\widetilde{\cX}_{\widetilde{\cZ}}\\
 \cN & = \widetilde{\cN}  - \frac{2 \widetilde{\rho}}{\|\widetilde{\cX}_{\widetilde{\cZ}}\|^2 }\widetilde{\cX}_{\widetilde{\cZ}}
\end{align*}
which concludes the proof.
\end{proof}
\begin{pro} \label{p3}
If ($\cZ$, $\widetilde{\cZ}$) is  Ribaucour integrable  with  Ribaucour  data $(\rho,\phi)$ and $\widetilde{\rho}$ and $\widetilde{\phi} $ are as in \eqref{itrans},  then 
the frontals 
\begin{align}
& \cX^+ :=   \cX_\cZ, \qquad  \cX^- := \widetilde{\cX}_{\widetilde{\cZ}},
\label{assrib}
\end{align}
are the associated frontals of a frontal $\bX$ in $\H^3$ with hyperbolic Gauss maps $\cG = \Pi\circ \cN$ and $ \widetilde{\cG }= \Pi\circ\widetilde{\cN}$, respectively.
\end{pro}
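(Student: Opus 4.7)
My plan is to build $\bX$ directly from Theorem \ref{t0} and then identify $\cX_\cZ$ and $\widetilde{\cX}_{\widetilde{\cZ}}$ as the associated frontals of $\bX$ produced by Proposition \ref{p1}.

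First I would apply Theorem \ref{t0} with $\cG_+ = \Pi \circ \cN$ and $\cG_- = \Pi \circ \widetilde{\cN}$. The hypothesis $\cN \neq \widetilde{\cN}$ guarantees $\cG_+ \neq \cG_-$ everywhere, and the Ribaucour integrability condition \eqref{RP} is equivalent, via the exactness of $\Re\,d\log(\cG_+ - \cG_-)$ on the punctured locus $\cG_+\neq\cG_-$, to the period condition \eqref{existence} required by Theorem \ref{t0}. This yields a frontal $\bX:\Sigma\longrightarrow \H^3$ whose hyperbolic Gauss maps are exactly $\cG_+$ and $\cG_-$.

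Next I would identify the associated frontals of this $\bX$ with $\cX_\cZ$ and $\widetilde{\cX}_{\widetilde{\cZ}}$. By \eqref{eta1}, any pair of associated frontals of $\bX$ is of the form $\cX^+ = \lambda(\cN^+ - \cN^-)$ and $\cX^- = -\mu(\cN^+ - \cN^-)$, with $\lambda,\mu$ determined by \eqref{dnu} up to multiplicative constants and those constants pinned down by the normalization \eqref{eta2}. Setting $\cN^+ = \cN$ and $\cN^- = \widetilde{\cN}$, the construction inside the proof of Theorem \ref{t02} exhibits $\cX_\cZ = \lambda(\cN - \widetilde{\cN})$ with $\lambda$ satisfying exactly the first equation in \eqref{dnu}; and applying Lemma \ref{lema1} to the reversed pair $(\widetilde{\cZ},\cZ)$ presents $\widetilde{\cX}_{\widetilde{\cZ}}$ as $\tilde\mu(\widetilde{\cN}-\cN) = -\tilde\mu(\cN^+-\cN^-)$, with $\tilde\mu$ satisfying the second equation in \eqref{dnu}.

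The only nontrivial remaining step is to verify the scalar normalization \eqref{eta2}. Using $\rho = \lambda(1-\langle\cN,\widetilde{\cN}\rangle)$, the analogous identity $\widetilde{\rho} = \tilde\mu(1-\langle\cN,\widetilde{\cN}\rangle)$ for the reversed pair, and $\|\cX_\cZ\|^2 = 2\lambda^2(1-\langle\cN,\widetilde{\cN}\rangle)$, the scale relation $\widetilde{\rho} = \rho/\|\cX_\cZ\|^2$ furnished by \eqref{itrans} collapses precisely to $2\lambda\tilde\mu(1-\langle\cN,\widetilde{\cN}\rangle)=1$, which is \eqref{eta2}. With this in hand, $\cX^+=\cX_\cZ$ and $\cX^- = \widetilde{\cX}_{\widetilde{\cZ}}$ are the associated frontals of $\bX$ in the sense of Proposition \ref{p1}.

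I expect the main obstacle to be the bookkeeping between three related but distinct parameterizations — the $(\lambda,\mu)$ of \eqref{eta1}, the $\lambda$ from the proof of Theorem \ref{t02}, and the companion object produced by Lemma \ref{lema1} — together with the sign conventions for support functions and for unit normals that one must carry through. Once those identifications are aligned the verification of \eqref{eta2} is a short calculation, but arranging the alignment is really where the content of the proposition sits.
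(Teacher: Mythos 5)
Your argument is correct, but it reaches the conclusion by a longer route than the paper's. The paper's proof is a direct application of the converse half of Proposition \ref{p1}: from \eqref{itrans} and \eqref{relationassociated} it reads off $\cX^+=\frac{1}{2\widetilde{\rho}}(\cN-\widetilde{\cN})$ and $\cX^-=\frac{1}{2\rho}(-\cN+\widetilde{\cN})$, checks that the symmetry condition \eqref{sc} holds and that the two frontals envelope the congruence of spheres with centers $\cX^++r\cN=\cX^-+r\widetilde{\cN}$ and radius $r=-\frac{1}{2\rho}-\frac{1}{2\widetilde{\rho}}$, and then quotes Proposition \ref{p1} to exhibit $\bX=-\frac{1}{2\rho}(1,\cN)-\frac{1}{2\widetilde{\rho}}(1,\widetilde{\cN})$. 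You instead first manufacture $\bX$ from Theorem \ref{t0} and only afterwards match its associated frontals to $\cX_\cZ$ and $\widetilde{\cX}_{\widetilde{\cZ}}$; this obliges you to establish the extra (true) equivalence of the period conditions \eqref{RP} and \eqref{existence} via exactness of $\Re\,d\log(\cG-\widetilde{\cG})$, and to chase normalization constants --- work that is unnecessary, since Ribaucour integrability already hands you $\cX_\cZ$ and, through Lemma \ref{lema1}, $\widetilde{\cX}_{\widetilde{\cZ}}$ in exactly the form \eqref{eta1}, so one can go straight to Proposition \ref{p1}. The decisive computation is nevertheless the same in both proofs: the scale relation $\widetilde{\rho}=\rho/\|\cX_\cZ\|^2$ from \eqref{itrans} is precisely the normalization \eqref{eta2}, equivalently \eqref{sc}. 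One small imprecision to note: \eqref{eta2} constrains only the product $\lambda\tilde{\mu}$, not each factor separately, so it does not by itself ``pin down'' the constants; your argument still closes because you verify \eqref{eta2} for the specific $\lambda$ and $\tilde{\mu}$ attached to $\cX_\cZ$ and $\widetilde{\cX}_{\widetilde{\cZ}}$, which is all the converse of Proposition \ref{p1} requires.
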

\begin{proof}
It is clear that $\cX^+$ and $\cX^-$ are frontals in $\R^3$ with unit normal vectors $\cN$ and $ \widetilde{\cN}$ respectively.
Moreover, from \eqref{itrans} and  \eqref{relationassociated}, 
\begin{align*}
& \rho = \widetilde{\rho}\  \|\cX^+\|^2 , \qquad  \widetilde{\rho}  = \rho\ \|\cX^-\|^2,\\
 &   \cX^+ = \frac{1}{2  \widetilde{\rho}} ( \cN - \widetilde{\cN}),  \qquad  \cX^- = \frac{1}{2 \rho }( - \cN  + \widetilde{\cN}) .
\end{align*}
Thus,  the symmetry condition \eqref{sc} is satisfied and  $\cX^+$ and $\cX^-$  are envelopes of a congruence of spheres with center
$$\x = \cX^+ + r \cN = \cX^- + r\widetilde{\cN},$$
and radius $$r= -\frac{1}{2\rho} - \frac{1}{2\widetilde{\rho}} .$$ 
From Proposition \ref{p1} we conclude that the frontal  $\bX$ given by
\begin{equation*}
\bX  =  -\frac{1}{2\rho} (1,\cN ) -  \frac{1}{2\widetilde{\rho}} (1,\widetilde{\cN} ).
\end{equation*}
satisfies the required properties. \end{proof}
\begin{remark} {\rm The above proposition gives a geometric relationship between  Ribaucour integrable pairs of frontals in $\R^3$  and frontals in $\H^3$. }
\end{remark}

Restricting ourselves to minimal surfaces in $\R^3$ and having in mind Remark \ref{rk03}, we may use some results in \cite{Bi,CFT2,LT} in order to prove the following proposition:
\begin{pro}\label{fp}
Let $\cZ: \Sigma \longrightarrow \R^3$ be a non totally umbilical  minimal immersion   with Gauss map ${\cal N}$. Assume there exist two smooth functions $\rho,\phi:\Sigma\longrightarrow \R$ satisfying  
\begin{align}
d\rho &= < \nabla^m\phi, d\cN>\label{rho}\\
{\rm Hess} (\phi) & =  \rho c \  I^m + (\rho - c \phi) \ II^m \label{hessiano}\\
|\nabla^m \phi|^2 & = - \rho^2 + 2 c \rho \phi \label{modulo}
\end{align}
for some real constant $c$, $c\neq 0$, where  $\nabla^m$ denotes the gradient respect to the first fundamental form of $\cZ$. Then 
\begin{equation} 
\widetilde{\cZ} = \cZ - \frac{1}{c \rho}\left( \nabla^m \phi + \rho N^m\right), \label{rb}
\end{equation}
is a minimal immersion with Gauss map 
\begin{equation} 
\widetilde{{\cal N}} = - \frac{1}{c \phi} \left( \nabla^m \phi + \rho {\cal N}\right)  +
 {\cal N}, \label{nrb}
\end{equation}
and $(\cZ,\widetilde{\cZ} )$ is Ribaucour integrable with   Ribaucour  data $(\rho,\phi)$.
\\
Moreover,  away from umbilics, any minimal Ribaucour transformation of $\cZ$  is, locally, obtained as in \eqref{rb} for some regular functions $\rho$ and $\phi$ satisfying \eqref{rho}, \eqref{hessiano} and \eqref{modulo}.
\end{pro}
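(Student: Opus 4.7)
The plan is to first recognize \eqref{rb} and \eqref{nrb} as an instance of the general Ribaucour machinery from Theorem \ref{t02}, then to verify minimality of $\widetilde{\cZ}$, and finally to derive the converse.

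Because $\cZ$ is an immersion, Remark \ref{rk01} forces the associated frontal of the candidate pair to be $\cX_\cZ = \nabla^m\phi + \rho\cN$. Since $\nabla^m\phi$ is tangent to $\cZ$, it is orthogonal to $\cN$, so $\|\cX_\cZ\|^2 = |\nabla^m\phi|^2 + \rho^2$, which equals $2c\rho\phi$ by \eqref{modulo}. Plugging this into the Ribaucour formulas \eqref{ribeq1} and \eqref{ribeq2} collapses them precisely to \eqref{rb} and \eqref{nrb}, and verifies that $\widetilde{\cN}$ is the unit normal of $\widetilde{\cZ}$. The three algebraic identities \eqref{ribeq} are immediate: $\langle \cX_\cZ,\cN\rangle = \rho$ by the same orthogonality; $\langle \cX_\cZ, d\cN\rangle = \langle \nabla^m\phi, d\cN\rangle = d\rho$ by \eqref{rho}; and $\langle \cX_\cZ, d\cZ\rangle = \langle \nabla^m\phi, d\cZ\rangle = d\phi$ by the defining property of the gradient with respect to $I^m$. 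Thus $(\cZ, \widetilde{\cZ})$ is Ribaucour integrable with Ribaucour data $(\rho, \phi)$.

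To see that $\widetilde{\cZ}$ is minimal, I would work in local principal coordinates $(u_1, u_2)$ for $\cZ$, which exist away from the isolated umbilics of the minimal immersion $\cZ$ and which, by the line-of-curvature preservation built into Definition \ref{d1}, are simultaneously principal for $\widetilde{\cZ}$. In such coordinates $II^m$ is diagonal with principal curvatures $\pm k$, and \eqref{hessiano} decouples into $\phi_{;ii} = (c\rho \pm k(\rho - c\phi))\,g_{ii}$. Differentiating \eqref{rb} using the Gauss-Weingarten equations and inserting these diagonal Hessian entries produces explicit Bianchi-type rational expressions for the transformed principal curvatures $\widetilde{k}_i$; combining $k_1 + k_2 = 0$ with the balance \eqref{modulo} makes the sum $\widetilde{k}_1 + \widetilde{k}_2$ collapse to zero, proving minimality.

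For the converse, fix a non-umbilic point and use local principal coordinates for $\cZ$. Proposition \ref{sc} supplies functions $\rho$ and $\phi$ satisfying \eqref{rho} such that $\widetilde{\cZ}$ has the form \eqref{ribt1}. Imposing $\widetilde{H} = 0$ componentwise and invoking \eqref{rho} together with the Codazzi equations for the minimal surface $\cZ$ yields two scalar PDEs for $\phi$ which, after a single integration, are exactly \eqref{hessiano} and \eqref{modulo} with some real integration constant $c$; this is the classical derivation of \cite{Bi, CFT2, LT}. The main obstacle is the minimality step: the three PDEs \eqref{rho}, \eqref{hessiano}, \eqref{modulo} look a priori unrelated, and the cleanest route is to observe that in principal coordinates $\widetilde{H}$ becomes a rational function of the diagonal Hessian entries, $|\nabla^m\phi|^2$, $\rho^2$, and $\rho\phi$, so that \eqref{hessiano} and \eqref{modulo} are precisely the balance conditions that annihilate its numerator.
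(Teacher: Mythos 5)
The paper offers no proof of Proposition \ref{fp} at all: it is imported from the classical literature with only the remark that ``we may use some results in \cite{Bi,CFT2,LT}'', so your proposal actually supplies more than the text does. Your first step is correct and complete: setting $\cX_\cZ=\nabla^m\phi+\rho\cN$, orthogonality of $\nabla^m\phi$ and $\cN$ gives $\langle\cX_\cZ,\cN\rangle=\rho$, equation \eqref{rho} gives $\langle\cX_\cZ,d\cN\rangle=d\rho$ (and hence that $\cX_\cZ$ is a frontal with unit normal $\cN$), the defining property of the gradient gives $\langle\cX_\cZ,d\cZ\rangle=d\phi$, and \eqref{modulo} turns $\|\cX_\cZ\|^2$ into $2c\rho\phi$, which collapses \eqref{ribeq1}--\eqref{ribeq2} exactly to \eqref{rb}--\eqref{nrb}; since $\rho$ and $\phi$ are globally defined on $\Sigma$, Theorem \ref{t02} then delivers Ribaucour integrability, period condition \eqref{RP} included. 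The one substantive step you do not execute is the minimality of $\widetilde{\cZ}$: you assert that in principal coordinates the sum $\widetilde{k}_1+\widetilde{k}_2$ ``collapses to zero'' once the decoupled Hessian entries from \eqref{hessiano} and the balance \eqref{modulo} are inserted, but that cancellation \emph{is} the content of the proposition and of the cited results of Bianchi and Corro--Ferreira--Tenenblat; a self-contained proof would have to write down the transformed principal curvatures and exhibit the cancellation. (It does work: in the classical notation of \cite{CFT2}, conditions \eqref{hessiano}--\eqref{modulo} are precisely the algebraic relation $S=2c\,\Omega W$ together with the linear system obtained by differentiating it, which is the known characterization of Ribaucour transformations preserving minimality, so your strategy is the right one and not merely plausible.) The same comment applies to your treatment of the converse, which you correctly reduce to the classical derivation via Proposition \ref{sc}. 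In short: correct structure, more explicit than the paper where the paper is explicit at all, but the central computation is gestured at rather than performed.
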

\begin{defi} {\rm 
When $ 2 c =1$, we  shall also say that the Ribaucour integrable pair $(\cZ,\widetilde{\cZ})$ is a}  minimal normalized  Ribaucour pair.
\end{defi}
\begin{remark} \label{rk2}
{\rm  If   $(\cZ, \widetilde{\cZ})$  is a minimal  Ribaucour integrable pair, then, by  applying the homothety $\cH_{2c}:\mathbb{R}^3\longrightarrow \mathbb{R}^3$ of ratio $2 c$, we get that  the pair  $\left(\cH_{2c}\circ\cZ, \cH_{2c}\circ\widetilde{\cZ}\right)$ is a minimal normalized Ribaucour pair.}
\end{remark}
From \eqref{itrans} and \eqref{modulo} we have,
\begin{pro}\label{cR} Let   $(\cZ,\widetilde{\cZ})$ be a minimal normalized Ribaucour pair and $(\rho,\phi)$ Ribaucour data of $(\cZ,\widetilde{\cZ})$. Then $(\widetilde{\cZ},\cZ)$ is also a minimal normalized Ribaucour pair and $(1/\phi, 1/\rho)$ are Ribaucour data of $(\widetilde{\cZ},\cZ)$.
\end{pro}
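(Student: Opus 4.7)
The plan is to combine the general Ribaucour inversion of Lemma \ref{lema1} with the specific normalization \eqref{modulo}, which for $2c=1$ reduces $|\nabla^m\phi|^2$ to a clean expression in $\rho$ and $\phi$. Lemma \ref{lema1} already guarantees that $(\widetilde{\cZ},\cZ)$ is Ribaucour integrable with data
\[
\widetilde{\rho}=\frac{\rho}{\|\cX_\cZ\|^2},\qquad \widetilde{\phi}=\frac{\phi}{\|\cX_\cZ\|^2},
\]
so the problem splits in two: identify $\|\cX_\cZ\|^2$ explicitly, and then check that the reversed pair is again a minimal normalized pair.

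For the first step, I would use Remark \ref{rk01} (applicable because $\cZ$ is a minimal immersion, in particular a front with no singular points, recall Remark \ref{rim}) to write $\cX_\cZ=\nabla^m\phi+\rho\cN$, so that
\[
\|\cX_\cZ\|^2=|\nabla^m\phi|^2+\rho^2.
\]
Substituting \eqref{modulo} with $2c=1$ collapses this to $\|\cX_\cZ\|^2=\rho\phi$, and plugging into \eqref{itrans} yields exactly $\widetilde{\rho}=1/\phi$ and $\widetilde{\phi}=1/\rho$, the identification claimed.

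For the second step, $\widetilde{\cZ}$ is a minimal immersion by the first assertion of Proposition \ref{fp}, so the reversed pair is at least a minimal Ribaucour pair. What remains is to verify the normalization constant for $(\widetilde{\cZ},\cZ)$ is still $\tilde c=1/2$. Using the relation $\widetilde{\cX}_{\widetilde{\cZ}}=(\widetilde{\rho}/\rho)\,\cX_\cZ$ from \eqref{relationassociated}, together with $\|\cX_\cZ\|^2=\rho\phi$, I compute
\[
\|\widetilde{\cX}_{\widetilde{\cZ}}\|^2=\left(\frac{\widetilde{\rho}}{\rho}\right)^{2}\rho\phi=\frac{1}{\rho\phi}.
\]
Applying Remark \ref{rk01} to the (minimal) immersion $\widetilde{\cZ}$ gives $\widetilde{\cX}_{\widetilde{\cZ}}=\widetilde{\nabla}^m\widetilde{\phi}+\widetilde{\rho}\widetilde{\cN}$, hence
\[
|\widetilde{\nabla}^m(1/\rho)|^2=\|\widetilde{\cX}_{\widetilde{\cZ}}\|^2-\widetilde{\rho}^{\,2}=\frac{1}{\rho\phi}-\frac{1}{\phi^2}=-\widetilde{\rho}^{\,2}+\widetilde{\rho}\widetilde{\phi},
\]
which is precisely \eqref{modulo} for the reversed pair with constant $\tilde c=1/2$. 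The remaining equations \eqref{rho} and \eqref{hessiano} for $(\widetilde{\cZ},\cZ)$ follow either from Ribaucour integrability (the gradient equation is just \eqref{ribeq} rewritten via \eqref{assorib}) or by invoking the converse in Proposition \ref{fp} combined with Remark \ref{rk1} once the constant has been pinned down.

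The only delicate point will be the verification that the constant comes out to exactly $1/2$ and not an arbitrary scaling; this is the step where \eqref{modulo} must be used twice, once to simplify $\|\cX_\cZ\|^2$ and once implicitly through the fact that the analogous identity for $\widetilde{\cZ}$ is a consequence of the minimality of $\widetilde{\cZ}$. Everything else is a rearrangement, and the proof reduces to these two short computations.
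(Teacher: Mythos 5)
Your argument is correct and is exactly the route the paper takes: the paper states the proposition as an immediate consequence of \eqref{itrans} and \eqref{modulo}, i.e.\ $\|\cX_\cZ\|^2=|\nabla^m\phi|^2+\rho^2=\rho\phi$ when $2c=1$, which turns $(\widetilde{\rho},\widetilde{\phi})$ into $(1/\phi,1/\rho)$ and reproduces \eqref{modulo} with the same constant for the reversed pair. You have merely written out the computation the authors leave implicit, so there is nothing to add.
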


\section{Flat surfaces in $\H^3$ and minimal normalized Ribaucour pairs}
\label{flatminimal}
In this section we will show that if we start with a flat front in $\H^3$ and consider the induced congruence of spheres discussed in subsection \ref{spacespheres}, then there is an explicit way to transform this congruence of spheres into one that defines a Ribaucour transformation between minimal surfaces. We will start with a review of  flat  fronts in $\H^3$.
\subsection{Flat fronts in $\H^3$}
 Let $\bX:\Sigma \longrightarrow \H^3$ be a
flat front with unit normal vector $\bN$. Then, the metric  $d\sigma^2:= I+III$
inherits a
canonical Riemann surface structure such that 
$d\sigma^2$ is hermitian. This canonical Riemann surface structure
provides a conformal representation for the immersion $\bX$ that allows one to represent any flat front in $\H^3$ in terms of holomorphic data (see  \cite{GMi},
\cite{GMM}  and \cite{KUY} for the details). 

Actually,  the
hyperbolic Gauss maps $\cG_-,\cG_+:\Sigma \longrightarrow\C\cup\{\infty\} $  are
holomorphic and we can  recover flat fronts in terms of  $\cG_-$ and $\cG_+$. In fact, adapting Theorem
2.11 in \cite{KUY} to our model of hyperbolic space, we have the following holomorphic representation:
\begin{teo}[\cite{KUY}] \label{teokuy}
Let $\cG_-$ and $\cG_+$ be non-constant meromorphic functions on a Riemann surface $\Sigma$
such that $\cG_-(p)\neq \cG_+(p)$ for all $p\in \Sigma$. Assume that
\begin{enumerate}
\item all the poles of the 1-form $\frac{d\cG_-}{\cG_- - \cG_+}$ are of order 1, and
 \item $\Re\int_\gamma \frac{d\cG_-}{\cG_- - \cG_+}=0$, for each loop $\gamma$ on $\Sigma$.
\end{enumerate}
Set 
\begin{equation}
\xi_-:= c_0 \exp \int \frac{d\cG_-}{\cG_--\cG_+}, \quad \text{and}\quad 
\xi_+:= c_1 \exp \int \frac{d\cG_+}{\cG_+-\cG_-},\label{xi}
\end{equation}
where $c_0$ and $ c_1$ are non zero complex numbers such that  $\xi_{+}\xi_{-}=\cG_{+}-\cG_{-}$. 
Then,  the map $\bX=(x_0,x_1,x_2,x_3): \Sigma\longrightarrow \H^3$ given by
\begin{align*}
x_1 + i \   x_2 & = \frac{\cG_-}{|\xi_-|^2} + \frac{\cG_+}{|\xi_+|^2} \\
x_0 & = \frac{1}{2}\left(  \frac{|\cG_-|^2 + 1}{|\xi_-|^2} + \frac{|\cG_+|^2 + 1}{|\xi_+|^2}\right)\\
x_3 & = \frac{1}{2}\left(  \frac{|\cG_-|^2 - 1}{|\xi_-|^2} + \frac{|\cG_+|^2 - 1}{|\xi_+|^2}\right)
\end{align*}
is singly valued on $\Sigma$ and  $\psi$ is a flat front if and only
if $\cG_-$ and $\cG_+$ have no common branch points. Moreover its unit normal vector $\bN=(n_0,n_1,n_2,n_3)$ is given  by 
\begin{align*}
n_1 + i \   n_2 & = -\frac{\cG_-}{|\xi_-|^2} + \frac{\cG_+}{|\xi_+|^2} \\
n_0 & = \frac{1}{2}\left( - \frac{|\cG_-|^2 + 1}{|\xi_-|^2} + \frac{|\cG_+|^2 + 1}{|\xi_+|^2}\right)\\
n_3 & = \frac{1}{2}\left(  -\frac{|\cG_-|^2 - 1}{|\xi_-|^2} + \frac{|\cG_+|^2 - 1}{|\xi_+|^2}\right)
\end{align*}

Conversely, any non-totally umbilical flat front can be constructed in this
way.
\end{teo}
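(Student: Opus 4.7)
The proof is essentially a specialization of the general Small-type representation of Theorem \ref{t0} to the holomorphic setting; all that really changes is the interpretation of the integrability data. First I would observe that the two hypotheses in the statement translate directly into conditions ensuring the multivalued primitives $\int d\cG_\pm/(\cG_\pm-\cG_\mp)$ produce genuine \emph{meromorphic} functions $\xi_\pm$ after exponentiation: condition (1), that the poles of $d\cG_-/(\cG_--\cG_+)$ are simple, is what prevents the exponential from developing essential singularities, while condition (2) is literally the real-period vanishing \eqref{existence} of Theorem \ref{t0}, so that $|\xi_\pm|^2$ descends to a well-defined single-valued function on $\Sigma$. The normalization $\xi_+\xi_-=\cG_+-\cG_-$ pins down the constants $c_0,c_1$ and matches the constraint $\rho^+\rho^-=\cG_+-\cG_-$ of Theorem \ref{t0}.

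Once this is in place, the second step is pure bookkeeping: I would substitute $\rho^\pm=\|\xi_\pm\|^2/(1+\|\cG_\pm\|^2)$ together with the explicit stereographic inverse
\[
\Pi^{-1}(w)=\frac{1}{1+|w|^2}\bigl(2\Re w,\,2\Im w,\,|w|^2-1\bigr)
\]
into the representation formulas \eqref{eq5g} and \eqref{eq6g}. Collecting the four components of $\bX$ and $\bN$ (with the sign choice that places the image in the future half of $\H^3$) yields verbatim the coordinate expressions stated in the theorem, thereby proving both the existence claim and the formulas.

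It remains to characterize which holomorphic data actually produce a \emph{flat front}, and to establish the converse. Flatness is equivalent to meromorphicity of the two hyperbolic Gauss maps — this is the classical criterion recalled in \cite{GMi}, \cite{GMM}, \cite{KUY}, and in our framework it can be read off \eqref{H1K1}, since holomorphicity of $\cG_\pm$ forces $K_e\equiv 1$, hence $K_I\equiv 0$. The front condition (as opposed to merely frontal) is handled via Lemma \ref{lemafronts}: the associated $\cX^\pm$ are fronts exactly when $\cN^\pm=\Pi^{-1}\circ\cG_\pm$ is immersive, which on a Riemann surface means $d\cG_\pm\neq 0$ at each point of non-immersivity of $\cX^\pm$; combining both sides this is equivalent to $\cG_+$ and $\cG_-$ having no common branch point. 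For the converse, any non-totally-umbilical flat front has holomorphic Gauss maps with $\cG_+\neq\cG_-$ satisfying \eqref{existence} automatically, so the direct part applies. The main technical obstacle I anticipate is verifying condition (1) on pole orders in the converse: at a pole of $d\cG_-/(\cG_--\cG_+)$, which arises only where $\cG_+-\cG_-$ vanishes, one needs a local model showing that the order of the zero of $\cG_+-\cG_-$ exceeds the orders of zeros of $d\cG_-$ there by exactly one. This is essentially a local umbilic analysis for flat fronts and is the step where the flat-front hypothesis really does work (whereas an arbitrary frontal with meromorphic Gauss maps might fail it).
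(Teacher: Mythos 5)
First, a point of reference: the paper does not prove this statement. Theorem \ref{teokuy} is quoted (``adapting Theorem 2.11 in \cite{KUY}'') and used as a black box, so there is no internal proof to compare yours against; the fair comparison is with the original argument of Kokubo--Umehara--Yamada and with the way the surrounding results (\ref{t0}, \ref{wein}, \ref{direct}, \ref{converse}) interface with it. On that score, the first two-thirds of your outline is sound: condition (2) is indeed the real-period condition \eqref{existence}, so Theorem \ref{t0} produces a frontal with the prescribed hyperbolic Gauss maps, and substituting $\rho^\pm=-|\xi_\pm|^2/(1+|\cG_\pm|^2)$ (the sign the paper itself uses in \eqref{rhph}) into \eqref{eq5g}--\eqref{eq6g} does reproduce the stated coordinates. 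Your remark that only $|\xi_\pm|^2$ and the product $\xi_+\xi_-$ need to be single-valued is also the right way to read hypotheses (1) and (2).

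The genuine gap is in the flatness step. It is not true that flatness ``can be read off \eqref{H1K1}'': that formula expresses $K^\pm,H^\pm$ of the Euclidean envelopes in terms of $H,K_e$ of $\bX$ and says nothing about holomorphicity forcing $K_e\equiv 1$. Within this paper's framework the correct mechanism is Theorem \ref{wein}: one must show that the specific support function $\rho^+=-|\xi_+|^2/(1+|\cG_+|^2)$, with $\xi_+$ and $\cG_+$ holomorphic and $\xi_+\xi_-=\cG_+-\cG_-$, makes the quantity $\Theta$ in \eqref{curvature} vanish identically (equivalently, that $\tfrac{1}{(\rho^+)^2}III^+$ is a flat metric), which then gives $K_I=K_e-1=0$. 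That is a concrete computation you have not done, and it is exactly where holomorphy of \emph{both} Gauss maps enters; citing the G\'alvez--Mart\'{\i}nez--Mil\'an conformality criterion is legitimate but then the ``only if'' direction (a flat front forces both $\cG_\pm$ meromorphic for the conformal structure of $I+III$) still needs that external input rather than anything in \eqref{H1K1}. Similarly, in the converse you correctly identify the pole-order condition (1) as the hard local statement at the zeros of $\cG_+-\cG_-$, but you only \emph{anticipate} it; as written the converse is not established. So the skeleton is right, but the two steps that carry the actual content of the theorem --- $\Theta=0$ in the direct part, and the simple-pole/real-residue analysis in the converse --- are missing.
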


\subsection{The geometric link between flat fronts in $\H^3$ and minimal surfaces in $\R^3$} 
We are now ready to show how flat fronts in $\H^3$ and minimal surfaces in $\R^3$ are related in a geometric way. Our next result shows how a given flat front is related to a normalized Ribaucour pair of minimal surfaces. 
\begin{teo}\label{direct}Let $\bX:\Sigma \longrightarrow \H^3$ be a non totally umbilical
flat front, with hyperbolic Gauss maps $\cG_-$ and $\cG_+$. Consider the 1-forms given by
\begin{equation}
\omega_- := \frac{4d\cG_+}{(\cG_+-\cG_-)^2}, \qquad \omega_+:= \frac{4d\cG_-}{(\cG_--\cG_+)^2}.\label{ms}
\end{equation}
Then, $(\omega_+,\cG_+)$  and $(\omega_-,\cG_-)$  are Weierstrass data for a minimal normalized Ribaucour pair  $(\cZ^+,\cZ^-)$, maybe branched on some cover of $\Sigma$,  in  $\R^3$.
\end{teo}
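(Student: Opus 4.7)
My plan is to verify the three defining features of a minimal normalized Ribaucour pair in turn: (i) $(\omega_\pm,\cG_\pm)$ are valid Weierstrass data, so each $\cZ^\pm$ is a (possibly branched) minimal immersion; (ii) $(\cZ^+,\cZ^-)$ is Ribaucour integrable in the sense of Theorem \ref{t02}; and (iii) the resulting Ribaucour data satisfy the normalization $c=1/2$ of Proposition \ref{fp}. For (i), the non-umbilic assumption together with Theorem \ref{teokuy} tells us $\cG_\pm$ are non-constant meromorphic functions with $\cG_+\neq\cG_-$ on $\Sigma$, so $\omega_\pm$ are meromorphic. A direct local analysis at each pole of $\cG_\pm$ shows that the Weierstrass integrand $\bigl(\tfrac{1-\cG_\pm^2}{2},\tfrac{i(1+\cG_\pm^2)}{2},\cG_\pm\bigr)\omega_\pm$ has only the kind of singularities typical of standard Weierstrass ends or branch points of finite order; this is precisely what forces passage to a branched cover when the resulting periods fail to close.

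For (ii), the period condition \eqref{RP} specialized to $\cG=\cG_+$, $\widetilde{\cG}=\cG_-$ reads $\Re\int_\gamma d\cG_-/(\cG_--\cG_+)=0$, which is exactly one of the hypotheses of Theorem \ref{teokuy} ensuring that the flat front $\bX$ itself is single-valued; hence it is automatic. The main computation is then the envelope condition, namely that $\cZ^+-\cZ^-$ is pointwise parallel to $\cN^--\cN^+$. My preferred route is to apply Proposition \ref{p3} in reverse: construct explicit Ribaucour data $(\rho,\phi)$ from the Weierstrass data, motivated by the support functions $\rho^\pm$ of Theorem \ref{t0} and the factorization $\xi_+\xi_-=\cG_+-\cG_-$, and verify that the Ribaucour associated frontal $\cX_{\cZ^+}=\nabla^m\phi+\rho\cN^+$ from \eqref{assorib} agrees with the associated frontal $\cX^+$ of $\bX$ given by \eqref{a1}. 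Once this identification is in place, Proposition \ref{p3} automatically promotes it to the sphere-congruence envelope property for $(\cZ^+,\cZ^-)$, yielding full Ribaucour integrability.

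Condition (iii) then reduces to verifying the three relations of Proposition \ref{fp} with $c=1/2$ by differentiating the data $(\rho,\phi)$ found above and exploiting the minimality of $\cZ^+$, which through \eqref{hk} amounts to $\hat{\Delta}^+\rho^++2\rho^+=0$. The chief obstacle is the envelope identification in step (ii): matching $\cX_{\cZ^+}$ to $\cX^+$ requires careful bookkeeping between the conformal structure coming from $III^+$, the complex Weierstrass 1-forms, and the Umehara--Yamada representation of Theorem \ref{teokuy}. However, the specific form $\omega_\pm=4\,d\cG_\mp/(\cG_\mp-\cG_\pm)^2$ is tailored precisely so that the Riccati-type integrability identity $|\nabla^m\phi|^2=-\rho^2+\rho\phi$ advertised in the introduction closes up, and the normalization $2c=1$ should then drop out of the very same algebra.
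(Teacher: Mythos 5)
Your outline follows the paper's strategy in its broad strokes: the paper also takes $\rho=\rho^+=-|\xi_+|^2/(1+|\cG_+|^2)$ and $\phi=1/\rho^-$ from the flat-front representation, verifies $d\rho=<\nabla^m_+\phi,d\cN^+>$ in the conformal parameter normalized by $4\,d\cG_+\,d\cG_-=(\cG_+-\cG_-)^2dz^2$, identifies $\cX_{\cZ^+}=\nabla^m_+\phi+\rho\cN^+$ with the associated frontal $\cX^+$ of $\bX$, and gets $|\nabla^m_+\phi|^2+\rho^2=\rho\phi$ so that Proposition \ref{fp} applies with $2c=1$. But there is a genuine gap at the end of your step (ii). Theorem \ref{t02}/Proposition \ref{fp} only hand you a Ribaucour partner $\widetilde{\cZ}$ \emph{defined} by the transformation formula \eqref{ribeq1}; nothing in your argument shows that this $\widetilde{\cZ}$ is the minimal surface $\cZ^-$ with Weierstrass data $(\omega_-,\cG_-)$, which is the actual content of the theorem. (Knowing that $\widetilde{\cN}=\cN^-$ is not enough: the Gauss map does not determine the minimal surface, you also need the $1$-form.) The paper closes this by computing the fundamental forms of $\widetilde{\cZ}$ and checking $\widetilde{I}=\tfrac{\phi^2}{\rho^2}III^+=I^m_-$ and $\widetilde{II}=II^m_-$, so that $\widetilde{\cZ}$ and $\cZ^-$ share Weierstrass data. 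Relatedly, ``Proposition \ref{p3} in reverse'' is not the right tool: Proposition \ref{p3} sends a Ribaucour pair to a frontal in $\H^3$ whose associated frontals are the $\cX^\pm$, not the envelopes $\cZ^\pm$, so reversing it cannot by itself deliver the envelope property for the specific pair $(\cZ^+,\cZ^-)$.

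A secondary error: in step (iii) you assert that minimality of $\cZ^+$ gives, via \eqref{hk}, $\hat{\Delta}^+\rho^++2\rho^+=0$. This conflates the support function of the associated frontal $\cX^+$ (which is what $\rho^+$ denotes in \eqref{hk}, and what you must use as Ribaucour datum) with the support function of the minimal surface $\cZ^+$. In fact here $\hat{\Delta}^+\rho^++2\rho^+=\phi=1/\rho^-\neq 0$; the identity $|\nabla^m_+\phi|^2+\rho^2=\rho\phi$ that produces the normalization $2c=1$ comes from the symmetry condition \eqref{sc} through \eqref{eta2}, \eqref{rhop}, \eqref{rhom} (equivalently, from the flatness of $\bX$ via $\Theta=0$ in Theorem \ref{wein}), not from the minimality of $\cZ^+$, which is already guaranteed by the Weierstrass construction. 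As written, this misattribution would derail your verification of \eqref{hessiano} and \eqref{modulo}.
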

\begin{proof}
Consider $\cZ^-$ and $\cZ^+$ minimal immersion, maybe branched on some cover of $\Sigma$,  in $\R^3$ with  Weierstrass data $(\omega_-,\cG_-)$ and $(\omega_+,\cG_+)$ and with Gauss map $\cN^-$ and $\cN^+$ respectively.

If $\cX^+$ and $\cX^-$ are the associated fronts of $\bX$, then from \eqref{eq5},  \eqref{eq6}  and Theorem \ref{teokuy},  their  support functions $\rho^+$ and $\rho^-$ are given by
\begin{equation}
\rho^+ = -\frac{|\xi_+|^2}{1 + |\cG_+|^2}, \qquad \rho^-=  -\frac{|\xi_-|^2}{1 + |\cG_-|^2}.  \label{rhph}
\end{equation}
We want to prove that  the smooth functions $\rho=\rho^+$ and $ \phi=1/\rho^-$ satisfy  equation  \eqref{rho}. To see this, we can argue  away from umbilic points of  $\bX$.  In fact,  around any non umbilic point,  we can take a complex parametrization  $\bX:\Sigma \longrightarrow \H^3$, $\bX=\bX(z)$ so that 
\begin{equation} 4 d\cG_+ d\cG_- = (\cG_+-\cG_-)^2 dz^2, \label{newp}
\end{equation}
see for instance \cite{MST}. Using  this parameter,  the first and second fundamental form of $\cZ^+$ and $\cZ^-$ can be written as
\begin{align}
& I_+^m  = \frac{(1 + |\cG_+|^2)^2}{4|\cG_+'|^2} |dz|^2,  & II^m_+ = -\frac{1}{2}\left( dz^2 + d\bar{z}^2\right) .\label{fsm2}\\
& I_-^m  = \frac{(1 + |\cG_-|^2)^2}{4|\cG_-'|^2} |dz|^2,  & II^m_- = -\frac{1}{2}\left( dz^2 + d\bar{z}^2\right) ,\label{fsm}
\end{align}
see \cite{O}.  

Using \eqref{xi},  \eqref{newp},  \eqref{rhph} we obtain 
\begin{align}
 \rho^+_z  (1 + |\cG_+|^2)^2 & =   4 \left(\frac{1}{\rho^-}\right)_{\bar{z}} |\cG_+'|^2 .\label{rhz}
\end{align}
From     \eqref{fsm2}  and   \eqref{rhz} we may conclude that $\rho=\rho^+$ and $ \phi=1/\rho^-$ satisfy  equation  \eqref{rho} and consequently,
$$ \cX^+ = \nabla^m_+ \phi + \rho \cN^+,$$
where $\nabla^m_+ $ denotes de Levi-Civita connection of $M^+$. 
From  \eqref{eta2},  \eqref{rhop} and   \eqref{rhom},  
$ \|\nabla^m_+\phi\|^2 + \rho^2 = \rho \phi $ and from Proposition \ref{fp},  we conclude that    if  $\widetilde{Z}$ is given by 
\begin{equation} \widetilde{\cZ} = \cZ^+ - \frac{2}{\rho}(\nabla^m_+\phi + \rho N^+),\label{mnrp}\end{equation}
the pair $(\cZ^+,\widetilde{\cZ})$ is a minimal normalized Ribaucour pair and the normal of $\widetilde{\cZ}$ is given by
\begin{equation} \widetilde{\cN} =  \cN^+ - \frac{2}{\phi}(\nabla^m_+\phi + \rho N^+).\label{nmrp}\end{equation}
Thus, from  \eqref{rho}, \eqref{hessiano}, \eqref{mnrp} and \eqref{nmrp} we get
\begin{align*}
d\widetilde{\cZ} & = -\frac{\phi}{\rho} d\cN + 2 \frac{d \rho}{\rho^2}  (\nabla^m_+\phi + \rho\cN^+),\\
d\widetilde{\cN} & = -\frac{\rho}{\phi} d\cZ^+ + 2 \frac{d \phi}{\phi^2}  (\nabla^m_+\phi + \rho N^+). 
\end{align*}
and using \eqref{rho}, \eqref{ms}, \eqref{fsm2} and \eqref{rhph} we obtain that the first and second fundamental form of $\widetilde{\cZ}$, $\widetilde{I}$ and $\widetilde{II}$, respectively, satisfy
\begin{align*}
\widetilde{I} & = \frac{\phi^2}{\rho^2} III^+ = I^m_-,\\
\widetilde{II} & =II^m_- .
\end{align*}
Therefore, the minimal surface given by $\widetilde{\cZ}$ admits $(\omega_-,\cG_-)$ as Weierstrass data and this concludes the proof.
\end{proof}

Our next result shows that conversely, by using the geometric construction described by Proposition \ref{p3}, we can recover any non totally umbilic  flat front  from a minimal normalized Ribaucour pair.

\begin{teo} \label{converse}Let  $(\cZ,\widetilde{\cZ}):\Sigma\longrightarrow\R^3$ be a minimal normalized Ribaucour pair   in $\R^3$ with respective Gauss maps, $\cN$ and $\widetilde{\cN}$.  If  $(\rho, \phi)$ are Ribaucour data of $(\cZ,\widetilde{\cZ})$ , then 
\begin{equation}
\cX^+ := \nabla^m_+ \phi + \rho \cN, \qquad \cX^- := -\frac{\nabla^m_-\rho}{\rho^2} + \frac{1}{\phi} \widetilde{	\cN}
\end{equation}
are the associated fronts  of a flat front
$\bX:\Sigma \longrightarrow \H^3$ with hyperbolic Gauss maps $\cG_+=\Pi\circ \cN$ and $\cG_-=\Pi\circ \widetilde{\cN}$, respectively,  and where 
$\Pi$, $\nabla^m_+$ and $\nabla^m_-$ denote the usual stereographic projection and  the gradient operators with respect to the first fundamental forms of $\cZ$ and $\widetilde{\cZ}$ respectively.

Moreover, if $\omega_+$ and $\omega_-$ are as in \eqref{ms}, then  $(\omega_+,\cG_+)$ and $(\omega_-,\cG_-)$ are Weierstrass data of the  minimal normalized Ribaucour pair  $(\cZ,\widetilde{\cZ})$.
\end{teo}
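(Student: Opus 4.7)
The strategy is to invert Theorem \ref{direct} by identifying the pieces in the statement with the geometric machinery of Section \ref{gc}, and then verifying flatness and the Weierstrass data claim separately.

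First, since $(\cZ,\widetilde{\cZ})$ is a minimal normalized Ribaucour pair with data $(\rho,\phi)$ and $\cZ$ is an immersion, Remark \ref{rk01} yields $\cX_\cZ=\nabla^m_+\phi+\rho\,\cN$. Proposition \ref{cR} ensures that $(\widetilde{\cZ},\cZ)$ is again a minimal normalized Ribaucour pair, this time with data $(1/\phi,1/\rho)$; applying Remark \ref{rk01} once more to this reversed pair gives
\[
\widetilde{\cX}_{\widetilde{\cZ}}=\nabla^m_-(1/\rho)+(1/\phi)\,\widetilde{\cN}=-\frac{\nabla^m_-\rho}{\rho^{2}}+\frac{1}{\phi}\,\widetilde{\cN},
\]
which is precisely the $\cX^-$ of the statement. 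Proposition \ref{p3} then produces a frontal $\bX\colon\Sigma\to\H^3$ whose associated frontals are $\cX^+=\cX_\cZ$ and $\cX^-=\widetilde{\cX}_{\widetilde{\cZ}}$ and whose hyperbolic Gauss maps are $\cG_+=\Pi\circ\cN$ and $\cG_-=\Pi\circ\widetilde{\cN}$.

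Next, I would establish that this $\bX$ is actually a flat front. My plan is to apply Theorem \ref{teokuy}, which characterizes non totally umbilic flat fronts in terms of holomorphicity of both hyperbolic Gauss maps in a common Riemann surface structure on $\Sigma$. Taking a local conformal coordinate $z$ for the minimal immersion $\cZ$, the Gauss map $\cG_+$ is automatically meromorphic in $z$, and the crux is to show that $\cG_-$ is meromorphic in the same $z$. This should follow from the complex reformulation of the three defining equations of a minimal normalized Ribaucour pair (Proposition \ref{fp} with $c=1/2$): written in a conformal coordinate, the equation $d\rho=\langle\nabla^m_+\phi,d\cN\rangle$ becomes a Cauchy--Riemann-type relation between $\phi_{\bar z}$, $\rho$ and the holomorphic derivative $\cG_+'$, while the Hessian identity and the norm identity $|\nabla^m_+\phi|^2=-\rho^2+\rho\phi$ impose the remaining algebraic constraints. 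A direct computation should then show that $\widetilde{\cN}=\cN-(2/\phi)(\nabla^m_+\phi+\rho\,\cN)$ depends meromorphically on $z$, and hence so does $\cG_-$; Theorem \ref{teokuy} then promotes $\bX$ from a frontal to a flat front.

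Finally, to identify the Weierstrass data, I would work in the distinguished complex parameter from the proof of Theorem \ref{direct}, determined by $4\,d\cG_+\,d\cG_-=(\cG_+-\cG_-)^2\,dz^2$, which is now globally available because $\cG_+$ and $\cG_-$ are meromorphic in a common conformal structure. By \eqref{fsm2}, the Hopf differential of $\cZ$ in this parameter is a fixed multiple of $dz^2$, so the holomorphic one-form $\omega$ in any Weierstrass representation $(\omega,\cG_+)$ of $\cZ$ satisfies $\omega\cdot d\cG_+\propto dz^2$. Substituting $dz^2=4\,d\cG_+\,d\cG_-/(\cG_+-\cG_-)^2$ then yields $\omega\propto d\cG_-/(\cG_--\cG_+)^2$, and the normalization built into the minimal normalized hypothesis pins down the proportionality constant so that $\omega=\omega_+$; the symmetric computation on $\widetilde{\cZ}$ recovers $\omega_-$. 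The main obstacle throughout is precisely the complex reformulation step ensuring that both Gauss maps share a common conformal structure on $\Sigma$: this is what upgrades the frontal constructed by Proposition \ref{p3} to a genuine flat front, and it is the technical heart of the argument.
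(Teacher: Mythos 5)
Your first paragraph is correct and is essentially what the paper does: the identifications $\cX^+=\cX_\cZ=\nabla^m_+\phi+\rho\,\cN$ and $\cX^-=\widetilde{\cX}_{\widetilde{\cZ}}=-\nabla^m_-\rho/\rho^2+\widetilde{\cN}/\phi$ via Remark \ref{rk01}, Proposition \ref{cR} (using $\|\cX_\cZ\|^2=|\nabla^m_+\phi|^2+\rho^2=\rho\phi$ from \eqref{modulo}), and Proposition \ref{p3} (the paper invokes Proposition \ref{p1} directly) are exactly right.

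The flatness argument, however, contains a genuine gap. The decisive step --- showing that $\widetilde{\cN}$, hence $\cG_-$, is meromorphic in the conformal coordinate of $\cZ$ --- is only announced (``a direct computation should then show\dots''), and you yourself identify it as the technical heart; as written, nothing is proved. Moreover, even granting that step, Theorem \ref{teokuy} is a representation formula: knowing that $\cG_\pm$ are meromorphic in a common structure and satisfy the period condition produces \emph{some} flat front with those Gauss maps, but you would still have to identify it with the frontal $\bX$ built by Proposition \ref{p3} (i.e.\ match the support functions $\rho^\pm$ with the $|\xi_\pm|^2$-expressions of Theorem \ref{t0}), and this is not addressed. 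The paper avoids all of this with a short direct computation: from the Hessian identity \eqref{hessiano} with $2c=1$ one gets $d\cX^+=\tfrac{\rho}{2}\,d\cZ+\tfrac{\phi}{2}\,d\cN$ (so $\cX^+$ is a front) and $\hat{\Delta}^+\rho=-2\rho+\phi$, while \eqref{modulo} gives $|\hat{\nabla}^+\rho|^2=|\nabla^m_+\phi|^2=\rho\phi-\rho^2$; substituting into \eqref{curvature} yields
\begin{equation*}
\Theta=\rho^2+\rho(\phi-2\rho)-(\rho\phi-\rho^2)=0,
\end{equation*}
and Theorem \ref{wein} then forces $K_I=0$, i.e.\ $\bX$ is flat. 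I would either carry out your conformality computation in full (it amounts to reproving that $\widetilde{I}$ is conformal to $III^+$, as in the proof of Theorem \ref{direct}) or, better, replace that whole portion by the $\Theta=0$ argument, after which the Weierstrass data claim follows by applying Theorem \ref{direct} to the resulting flat front rather than by the separate Hopf-differential computation you sketch.
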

\begin{proof}
From Proposition \ref{p1} the frontal  $\bX$ given by
\begin{equation*}
\bX  =  -\frac{1}{2\rho} (1,\cN ) -  \frac{\phi}{2} (1,\widetilde{\cN} ).
\end{equation*}
has $\cX^+$ and $\cX^-$ as associated  frontals.

Moreover, from  Proposition \ref{fp}, 
$$ D_X\nabla \phi = \frac{\rho}{2}  X  + (\frac{\phi}{2} - \rho) \ d\cN(X),  $$
for any tangent vector field $X$ along $\cZ$, where $D$ denotes the Levi-Civita connection of the first fundamental form of $\cZ$.
\\
Thus $$d\cX^+ =  \frac{\rho}{2}  d\cZ + \frac{\phi}{2}  \ d\cN$$
and $\cX^+$ is a front.
\\
Using also   \eqref{hessiano}, we get  $ \hat{\Delta}^+ \rho = -2 \rho + \phi$ and then we have  that in \eqref{curvature},  $\Theta=0$ which proves the map  $\bX$  is a flat front in $\H^3$.
\end{proof}
\begin{cor}
\label{c4}Let $\cZ:\Sigma\longrightarrow \R^3$ be a non totally umbilical  minimal immersion  with Weierstrass data $(\omega,g)$ and  $h$  be a solution 
of the following ordinary differential equation
\begin{equation} \label{re}
d h = k h^2 \omega - d g, \qquad \text{on $\Sigma$}
\end{equation}
such  that 
$$ \Re\int_\gamma \frac{dg}{h} = 0,  \qquad \text{ for any loop $\gamma$ on $\Sigma$},$$
where $k$ is a non-zero real constant. If $\widetilde{\Sigma}=\{ p\in \Sigma \ | \ h(p)\neq 0\}$, then there exists $\widetilde{\cZ}:\widetilde{\Sigma}\longrightarrow \R^3$ a well-defined minimal immersion with Weierstrass data 
\begin{equation}
\left(\frac{1}{ k h^2} d g, g + h\right),\label{nwd}
\end{equation} such that  $(\cZ, \widetilde{\cZ})$ is  a Ribaucour integrable pair.\\
Conversely, any minimal Ribaucour integrable pair $(\cZ, \widetilde{\cZ})$   can be obtained in this way. 
\end{cor}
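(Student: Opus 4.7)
The approach is to chain Theorems \ref{teokuy}, \ref{direct} and \ref{converse}, using the Riccati equation \eqref{re} as the bridge that matches flat-front holomorphic data with the Weierstrass data of a minimal Ribaucour pair.

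For the direct implication, set $\cG_+:=g$ and $\cG_-:=g+h$ on $\widetilde{\Sigma}$. Rewriting \eqref{re} as $d\cG_-=kh^{2}\omega$, the 1-form $d\cG_-/(\cG_--\cG_+)=kh\,\omega$ is holomorphic, so condition (1) of Theorem \ref{teokuy} holds trivially. Because $h$ is holomorphic and nowhere zero on $\widetilde{\Sigma}$, the form $dh/h$ has purely imaginary periods, and the hypothesis $\Re\int_\gamma dg/h=0$ is equivalent to the period condition (2) of Theorem \ref{teokuy}. This yields a non-totally-umbilic flat front $\bX$ with hyperbolic Gauss maps $\cG_\pm$ (non-constancy of $\cG_-$ is automatic, since $\cG_-\equiv\text{const}$ would force $\omega\equiv 0$ in \eqref{re}). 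Applying Theorem \ref{direct}, the associated normalized minimal Ribaucour pair $(\cZ^+,\cZ^-)$ has Weierstrass data $(4k\omega,g)$ and $(4\,dg/h^{2},g+h)$, so $\cZ^+=\cH_{4k}\circ\cZ$. Since homotheties manifestly preserve both clauses in the definition of Ribaucour integrability (envelopes of a congruence of spheres plus the intrinsic period condition on the Gauss maps), setting $\widetilde{\cZ}:=\cH_{1/(4k)}\circ\cZ^-$ produces a Ribaucour integrable pair $(\cZ,\widetilde{\cZ})$ whose second surface has the Weierstrass data \eqref{nwd}.

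For the converse, start with a minimal Ribaucour integrable pair $(\cZ,\widetilde{\cZ})$ with Weierstrass data $(\omega,g)$ and $(\widetilde{\omega},\widetilde{g})$, and set $h:=\widetilde{g}-g$. By Remark \ref{rk2} some homothety $\cH_{4k}$ normalizes the pair, and Theorem \ref{converse} then yields a flat front whose hyperbolic Gauss maps are $g$ and $\widetilde{g}$. Reading off the Weierstrass identities of Theorem \ref{direct} gives $4k\omega=4\,d\widetilde{g}/h^{2}$, which rearranges to the Riccati equation \eqref{re}, together with $\widetilde{\omega}=dg/(kh^{2})$ as claimed; the Ribaucour period condition translates to $\Re\int_\gamma dg/h=0$.

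The main technical point requiring care is tracking the homothety constant $k$ consistently in both directions of the correspondence and checking rigorously that $dh/h$ contributes no real periods on $\widetilde{\Sigma}$, so that the stated hypothesis \eqref{re} together with the integral condition corresponds exactly to the holomorphic data of a flat front. Once this bookkeeping is in place, everything else reduces to routine manipulation of Weierstrass data together with the two previous theorems.
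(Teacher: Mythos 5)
Your proposal is correct and follows essentially the route the paper intends: the corollary is stated without a separate proof precisely because it is the composite of Theorems \ref{teokuy}, \ref{direct} and \ref{converse} under the identification $\cG_+=g$, $\cG_-=g+h$, with the homothety $\cH_{4k}$ absorbing the normalization $2c=1$. Your key observation that $dh/h$ has purely imaginary periods on $\widetilde{\Sigma}$ (so that the hypothesis $\Re\int_\gamma dg/h=0$ is equivalent to the period condition on $d\cG_-/(\cG_--\cG_+)$) is exactly the point that makes the translation work, and your bookkeeping of the scaling constant in both directions is consistent.
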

\subsection{ New and old ends}

From now on $\D_\epsilon$ will denote the open disk of radius $\epsilon$ centered at the origin and  $\D_\epsilon^\star= \D_\epsilon\setminus\{0\}$.  
\begin{teo}\label{ft1} Let $\cZ:\D_\epsilon \longrightarrow \R^3$ be a conformal parametrization of a minimal surface without umbilical points. If $\widetilde{\cZ} :\D_\epsilon^\star \longrightarrow \R^3$ is a minimal end which is obtained from $\cZ$ by a Ribaucour transformation, then $\widetilde{\cZ}$ is a complete planar embedded end. 
\end{teo}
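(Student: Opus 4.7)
The plan is to use Corollary \ref{c4} to express $\widetilde{\cZ}$ in Weierstrass form and then read off the end type from the asymptotic behaviour at the puncture $z=0$. By Corollary \ref{c4}, the Ribaucour transformation corresponds to a solution $h$ of the Riccati equation $dh = kh^2\omega - dg$ on $\D_\epsilon$, and $\widetilde{\cZ}$ carries Weierstrass data $\bigl(dg/(kh^2),\, g+h\bigr)$ on $\widetilde\Sigma = \{h\neq 0\}$. Since $\widetilde{\cZ}$ is undefined at $0$, necessarily $h(0) = 0$. The no-umbilic hypothesis gives $dg\neq 0$ on $\D_\epsilon$, and evaluation of the Riccati equation at $z=0$ yields $h'(0) = -g'(0)\neq 0$, so $h$ has a simple zero at the origin.

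Writing $\omega = f(z)\,dz$ in a local coordinate and differentiating the Riccati equation (with $h(0)=0$ annihilating nonlinear terms), one obtains $h''(0) = -g''(0)$ and $h'''(0) = 2kf(0)g'(0)^2 - g'''(0)$. Two simultaneous cancellations follow: first,
\begin{equation}\label{eq:cancel}
(g+h)(z) = g(0) + \frac{kf(0)g'(0)^2}{3}\,z^3 + O(z^4),
\end{equation}
so the new Gauss map has finite limit $g(0)$ with vanishing linear and quadratic parts; and second, by a parallel calculation, the simple-pole part of $dg/(kh^2)$ cancels, yielding
\[
\tilde\omega := \frac{dg}{kh^2} = \frac{1}{kg'(0)\,z^2}\,dz + O(1)\,dz,
\]
a pole of order exactly $2$ with residue zero.

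Consequently the Weierstrass $1$-forms $\tfrac12(1-\tilde g^2)\tilde\omega$, $\tfrac{i}{2}(1+\tilde g^2)\tilde\omega$, and $\tilde g\,\tilde\omega$ all have double poles with vanishing residues at $z=0$, so the immersion admits the asymptotic expansion $\widetilde{\cZ}(z) = \Re(-A/z) + O(1)$ for some vector $A\in\C^3$. A direct computation using $g_0 := g(0)$ shows that $A$ is orthogonal to the limiting unit normal $\cN(g_0) = (2\Re g_0, 2\Im g_0, |g_0|^2-1)/(1+|g_0|^2)$; hence the end lies asymptotically in an affine plane perpendicular to $\cN(g_0)$ (so it is \emph{planar}, with no logarithmic growth), while the map $z\mapsto \Re(-A/z)$ provides an injective inversion-type parametrisation of the end within that plane (so it is \emph{embedded}). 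The induced metric is asymptotic to $|dz|^2/|z|^4$, placing the puncture at infinite distance, so the end is \emph{complete}.

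The main obstacle is establishing the two cancellations above: the vanishing of the linear and quadratic terms in \eqref{eq:cancel} and the vanishing of the simple-pole part of $\tilde\omega$. Both stem from the specific algebraic structure of the Riccati equation together with $h(0)=0$, and together they rule out catenoidal growth as well as higher-multiplicity planar ends. Once these are in place, the complete planar embedded nature of the end follows from standard asymptotic analysis of the Weierstrass $1$-forms.
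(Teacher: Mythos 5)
Your argument is correct and follows essentially the same route as the paper's: everything hinges on the Riccati equation forcing $h$ to have a simple zero at the puncture, so that $\tilde\omega$ acquires a pole of order exactly $2$ while $\tilde g=g+h$ extends with vanishing derivative, and completeness, planarity and embeddedness are then read off from the Weierstrass data. The only differences are cosmetic --- the paper normalizes $g(0)=0$ and invokes the L\'opez--Mart\'{\i}n pole-order criterion for embeddedness, whereas you perform the asymptotic expansion and residue cancellation explicitly --- but you should add one line justifying $h(0)=0$ (if $h(0)\neq 0$ then $\widetilde{\cZ}$ extends regularly across the origin and is not an end) and, as in the paper, rotate first so that $g$ is finite at $z=0$ before expanding.
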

\begin{proof}
From Remark \ref{rk03} we have that $(\cZ, \widetilde{\cZ})$ is Ribaucour integrable and without loss of generality, we may assume that $g(z)$, the Gauss map of $Z$, is such that $g(0)=0$. Also, since $\cZ$ has no umbilics, we have $g'(0)\neq 0$. The Weierstrass data $(\widetilde{\omega},\widetilde{g})$ associated to $\widetilde{\cZ}$ is given by the expressions in (\ref{nwd}). The idea of the proof is that the order of the zero of $h$ at $z=0$ controls the geometry of the end.

From (\ref{re}) it follows that $h'(0)=-g'(0)\neq 0$, so $h$ has a zero of order 1 at $z=0$. Therefore, $\widetilde{\omega}$ has a zero of order 2.

Now we recall the well known expressions of the metric $d\widetilde{s}^2$ and Gaussian curvature $\widetilde{K}$ of a minimal surface in terms of the Weierstrass data $(\widetilde{\omega},\widetilde{g})$, see \cite{O}. The following expressions hold.
\begin{equation}
d\widetilde{s}^2=\frac{1}{4}(1+|\widetilde{g}|^2)^2|\widetilde{\omega}|^2.
\label{metricweierstrass}
\end{equation}

\begin{equation}
\widetilde{K}=-\frac{16}{(1+|\widetilde{g}|^2)^4}\left|\frac{d\widetilde{g}}{\widetilde{\omega}}\right|^2.
\label{curvatureweierstrass}
\end{equation}

Substitution of (\ref{nwd}) into (\ref{metricweierstrass}) yields the following expression,

\begin{equation}
d\widetilde{s}^2=\frac{1}{4}(1+|g+h|^2)^2\left|\frac{g'}{kh^2}\right|^2\left|dz\right|^2.
\end{equation}

Now, since $h$ has a zero of order 1 at $z=0$, in the neighborhhood of $z=0$ we have the estimate below.
\[
d\widetilde{s}^2 \geq \frac{C}{|z|^4}|dz|^2,
\]
and it follows that the end is complete.
 
We also note that $\widetilde{\cZ}$ has finite total curvature. This follows directly after substitution of (\ref{nwd}) into (\ref{curvatureweierstrass}).

To prove that the end is embedded, we use the criteria for embeddedness as in \cite{LM}. In other words, we have to show that the maximum order of de poles at $z=0$ of $\Phi_{j}$, $j=1,2,3.$, defined below is exactly 2.
\[
\Phi_{1}=\frac{1}{2}(1-\widetilde{g}^2)\widetilde{\omega}, \,\, \Phi_{2}=\frac{i}{2}(1+\widetilde{g}^2)\widetilde{\omega}, \,\, \Phi_{3}=\widetilde{\omega}\widetilde{g}. 
\]

Recall that $h$ has a zero of order 1 at $z=0$, so, from (\ref{nwd}), it follows that $\widetilde{\omega}$ has a pole of order 2 at $z=0$ and therefore the maximum order of the poles of the forms $\Phi_{j}$ is indeed 2.

Finally, to prove that we have a planar end, we note that $\widetilde{g}$ has a zero of order 2 at $z=0$. This can be seen by taking successive derivatives and using (\ref{re}), and taking into account the fact that $h$ has a zero of order 1 at $z=0$. Thus, the third coordinate function of $\widetilde{\cZ}$, given by the real part of the integral of $\Phi_{3}$ has a finite limit at $z=0$ and the end is a planar end. 
\end{proof}
\begin{remark}{\rm Using a different approach, the above Theorem also  was proved in \cite{CFT2}.}
\end{remark}
\begin{teo}\label{ft2} Let $\cZ:\D_\epsilon^\star \longrightarrow \R^3$ be a conformal parametrization of a minimal surface without umbilical points. If the origin is an umbilic point of $\cZ$ and $\widetilde{\cZ} :\D_\epsilon^\star \longrightarrow \R^3$ is a minimal end such that $(\cZ,\widetilde{\cZ})$ is  Ribaucour integrable, then $\widetilde{\cZ}$ is a complete planar non-embedded end. 
\end{teo}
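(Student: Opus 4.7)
The plan mirrors the proof of Theorem~\ref{ft1}, with the crucial difference that the umbilic condition at the origin forces higher orders of vanishing at every step of the analysis. By Corollary~\ref{c4}, the Ribaucour integrable pair $(\cZ,\widetilde{\cZ})$ arises from Weierstrass data $(\omega,g)$ for $\cZ$ together with a solution $h$ of the Riccati equation $dh = k\,h^{2}\omega - dg$, and the Weierstrass data of $\widetilde{\cZ}$ is $(\widetilde{\omega},\widetilde{g}) = (dg/(kh^{2}),\,g+h)$, see (\ref{nwd}). Since $\widetilde{\cZ}$ has an end at the puncture, one must have $h(0)=0$. As in Theorem~\ref{ft1}, I would first compose with a rotation of $\R^{3}$ (a M\"obius transformation of $g$) to arrange $g(0)=0$; the umbilic hypothesis becomes $g'(0)=0$, so $g$ has a zero of some order $n\ge 2$ at the origin, say $g = g_{n}z^{n} + O(z^{n+1})$ with $g_{n}\neq 0$, while $\omega = w(z)\,dz$ with $w(0)\neq 0$.

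The heart of the proof is an order-of-vanishing analysis of $h$. Writing $h = h_{\ell}z^{\ell} + O(z^{\ell+1})$ with $h_{\ell}\neq 0$, one observes that $kh^{2}\omega$ contributes only from order $z^{2\ell}$ on, $-dg$ from order $z^{n-1}$, and $dh$ from order $z^{\ell-1}$. Matching the lowest order terms in $dh = kh^{2}\omega - dg$ forces $\ell=n$ and $h_{n}=-g_{n}$: any smaller $\ell$ would leave $\ell h_{\ell}z^{\ell-1}$ unbalanced on the right hand side, and any larger $\ell$ would leave $-ng_{n}z^{n-1}$ unbalanced on the left. A short induction comparing coefficients of $z^{k}$ for $n\le k\le 2n-1$ then yields $h_{j}=-g_{j}$ for all $n\le j\le 2n$, since the nonlinear term $kh^{2}\omega$ still contributes nothing at those orders; the first deviation occurs at $z^{2n}$, where $kw(0)h_{n}^{2} = kw(0)g_{n}^{2}\neq 0$ enters. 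Consequently $\widetilde{g} = g+h$ vanishes to order exactly $2n+1$ at the origin, while $\widetilde{\omega} = dg/(kh^{2})$ has a pole of order $2n-(n-1) = n+1\ge 3$ there.

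With these two facts, the three conclusions follow as in Theorem~\ref{ft1}. Using (\ref{metricweierstrass}) together with the estimate $|\widetilde{\omega}|^{2}\gtrsim |z|^{-2(n+1)}$ near $0$ and $|\widetilde{g}|\to 0$, one obtains $d\widetilde{s}^{2}\gtrsim |z|^{-2(n+1)}|dz|^{2}$, so completeness of the end at $z=0$ holds since $n+1\ge 3>1$; finite total curvature likewise follows from (\ref{curvatureweierstrass}). Planarity is immediate from $\widetilde{g}(0)=0$ (finite limit of the Gauss map), equivalently from the fact that $\Phi_{3}=\widetilde{\omega}\widetilde{g}$ is holomorphic at $0$, giving a finite limit of the height function. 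Finally, $\Phi_{1}=\tfrac{1}{2}(1-\widetilde{g}^{2})\widetilde{\omega}$ and $\Phi_{2}=\tfrac{i}{2}(1+\widetilde{g}^{2})\widetilde{\omega}$ inherit from $\widetilde{\omega}$ a pole of order $n+1\ge 3$ at the puncture, so the maximum pole order of the $\Phi_{j}$ strictly exceeds $2$, violating the embeddedness criterion of \cite{LM} used in Theorem~\ref{ft1}; hence the end is non-embedded. The step I expect to require the most care is the coefficient-matching argument yielding $h_{j}=-g_{j}$ through order $2n$: this single cancellation is what simultaneously pins down the pole order $n+1$ of $\widetilde{\omega}$ (ruling out embeddedness) and the zero of $\widetilde{g}$ at $z=0$ (forcing planarity), thereby converting the qualitative umbilic hypothesis on $\cZ$ into the quantitative statements needed for $\widetilde{\cZ}$.
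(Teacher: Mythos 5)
Your argument is correct and follows essentially the same route as the paper's proof: an order-of-vanishing analysis of $h$ at the puncture via the Riccati equation (\ref{re}) showing $h$ vanishes to the same order $n\ge 2$ as $g$, so that $\widetilde{\omega}$ acquires a pole of order $n+1>2$ (non-embeddedness by \cite{LM}) while $\widetilde{g}$ still vanishes at the origin (planarity), with completeness and finite total curvature as in Theorem \ref{ft1}. Your coefficient-matching through order $2n$ is fine, though it can be shortened by noting that $(g+h)'=kh^{2}\omega$ vanishes to order exactly $2n$, whence $\widetilde{g}=g+h$ vanishes to order exactly $2n+1$.
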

\begin{proof}
From the fact that $\cZ(0)$ is an umbilic point, it follows that $g$ has a zero of order $m>1$ at $z=0$. Using (\ref{re}) and the fact that $h(0)=0$ we may conclude that $h$ also has a zero of order $m$ at $z=0$. The completeness and finite total curvature of $\widetilde{\cZ}$ are proved as in the proof of theorem \ref{ft1}.
The non-embeddedness comes from the fact that by looking at (\ref{nwd}) we see that $\widetilde{\omega}$ would have a pole of order $1+m>2$ and by the criteria in \cite{LM} the end $\widetilde{\cZ}$ is not embedded.  
\end{proof}
\begin{teo}\label{ft3} Let $\cZ:\D_\epsilon^\star \longrightarrow \R^3$ be a conformal parametrization of an embedded  minimal end of catenoid type without umbilic points. If  $\widetilde{\cZ} :\D_\epsilon^\star \longrightarrow \R^3$ is a minimal surface  and  $(\cZ,\widetilde{\cZ}) $  Ribaucour integrable, then  $\widetilde{\cZ}$ is a an embedded minimal end of catenoid type with the same limiting value Gauss map as $\cZ$.
\end{teo}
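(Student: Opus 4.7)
My strategy is to adapt the Weierstrass/Riccati machinery from the proof of Theorem~\ref{ft1}. By Corollary~\ref{c4}, the Ribaucour integrable pair $(\cZ,\widetilde\cZ)$ corresponds to a solution $h$ of the Riccati ODE $dh=kh^{2}\omega-dg$ on $\D_\epsilon^\star$, where $(\omega,g)$ are Weierstrass data for $\cZ$, and the transformed data are $\widetilde\omega=g'\,dz/(kh^{2})$, $\widetilde g=g+h$. Because $\cZ$ is an embedded catenoid end at $z=0$ without umbilics, after rotating $\R^{3}$ I may normalize so that $g$ extends holomorphically to $z=0$ with $g(0)=g_{0}$ (the limiting Gauss map value) and $g'(0)=g_{1}\neq 0$, while $\omega=\omega_{1}(z)\,dz$ with $\omega_{1}(z)=a/z^{2}+O(1/z)$ and $a\neq 0$. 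The whole argument then reduces to controlling the order of vanishing of $h$ at $z=0$.

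The heart of the proof is this local analysis. I would linearize the Riccati equation by setting $h=-y'/(k\omega_{1} y)$, which yields
\[
y''-\frac{\omega_{1}'}{\omega_{1}}\,y'-k\,\omega_{1}\,g'\,y=0.
\]
Since $\omega_{1}'/\omega_{1}=-2/z+O(1)$ and $k\omega_{1}g'=kag_{1}/z^{2}+O(1/z)$, the point $z=0$ is a regular singular point with indicial equation $r^{2}+r-kag_{1}=0$ and roots $r_{1,2}=(-1\pm\sqrt{1+4kag_{1}})/2$. The two Frobenius branches $y_{i}=z^{r_{i}}(1+O(z))$ produce meromorphic Riccati solutions $h_{i}=-y_{i}'/(k\omega_{1}y_{i})=c_{i}\,z+O(z^{2})$ with $c_{i}=-r_{i}/(ka)$. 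Because $h$ must be single-valued on $\D_\epsilon^\star$ for $\widetilde\omega$ and $\widetilde g$ to define a minimal surface there, $h$ coincides with one of the $h_{i}$; in particular $h(0)=0$ with slope $c=c_{i}$ a nonzero root of $kac^{2}-c-g_{1}=0$ (nonzero since $c=0$ would force $g_{1}=0$).

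Once this asymptotic is in hand, the conclusions follow by direct substitution. The form $\widetilde\omega=g'\,dz/(kh^{2})$ has a pole of order exactly $2$ at $z=0$, $\widetilde g=g+h$ extends holomorphically with $\widetilde g(0)=g_{0}$, so the limiting Gauss maps of $\widetilde\cZ$ and $\cZ$ agree, and $\widetilde g(z)-g_{0}=(g_{1}+c)z+O(z^{2})$ has a simple zero because $g_{1}+c=0$ together with $kac^{2}-c-g_{1}=0$ would give $kag_{1}^{2}=0$, a contradiction. The embeddedness criterion of \cite{LM} used in the proofs of Theorems~\ref{ft1}--\ref{ft2} then identifies the maximum pole order of the $\widetilde\Phi_{j}$ as exactly $2$, so the end is embedded, while the order-two pole of $\widetilde\omega$ paired with the simple zero of $\widetilde g-g_{0}$ excludes the planar profile and places the end in the catenoid family; completeness and finite total curvature are read off from \eqref{metricweierstrass}--\eqref{curvatureweierstrass} as in Theorem~\ref{ft1}. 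The step I expect to be most delicate is the single-valuedness argument: when $r_{1}-r_{2}\in\Z$ the Frobenius theory produces logarithmic solutions, and one must check by hand that no such log-type Riccati solution is compatible with $\widetilde\cZ$ being a well-defined surface on $\D_\epsilon^\star$, so that the clean asymptotic $h(z)=cz+O(z^{2})$ with $c\neq 0$ persists in all cases.
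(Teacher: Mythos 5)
Your proposal follows essentially the same route as the paper's proof: the paper likewise reduces to the Riccati equation of Corollary~\ref{c4}, linearizes it (working with $\mu=1/h$ and the associated second-order equation $w''+Pw'+Qw=0$ rather than your substitution $h=-y'/(k\omega_{1}y)$, which yields an equivalent equation), computes the indicial roots at the regular singular point $z=0$, invokes single-valuedness of $h$ to obtain the asymptotic $h(z)=cz+O(z^{2})$ with $c\neq 0$, and then reads the catenoidal behaviour off the transformed Weierstrass data. The resonant/logarithmic case you flag as delicate is resolved exactly as you anticipate: single-valuedness of $h$ forces the monodromy of the linear equation to act by a scalar on the relevant solution, which either kills the coefficient of the logarithmic term or forces the solution onto the Frobenius eigenvector, so the leading behaviour $h\sim cz$ with $c$ one of the two admissible slopes persists in every case.
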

\begin{proof}
Let $(\omega,g)$ and $(\widetilde{\omega},\widetilde{g})$ be, respectively, the Weierstrass data of $\cZ$ and $\widetilde{\cZ}$. Since $\cZ$ is asymptotic to a catenoid end, we may assume that its Weierstrass data defined on $\D_\epsilon^\star$ have the following form:
\[
g(z)=\eta_{1}(z)z, \,\, \omega(z)=\frac{\eta_2(z)}{z^2},
\]
where $\eta_{i}$, $i=1,2$, are holomorphic functions that extend regularly to $z=0$ with $\eta_{i}(0)=1$.

Since the Weierstrass data of $\cZ$ and $\widetilde{\cZ}$ are related by (\ref{nwd}), to control the geometry of $\widetilde{\cZ}$ we must understand the local behaviour of $h$ near $z=0$. We will show that $h$ has a zero at $z=0$ and that we the following expression is valid 
\[
-\frac{h'}{h^2}=(\frac{1}{h})'=\psi(z)/z^2,
\]
where $\psi(z)$ is holomorphic with $\psi(0)$ a non-zero real number.

The expressions above imply that $\widetilde{\cZ}$ generated by $(\widetilde{\omega},\widetilde{g})$ is indeed asymptotic to a catenoid end with the same limiting value of the Gauss map.
 
For computations, it turns out that it is simpler to consider the function $\mu(z)=\frac{1}{h(z)}$

To control $h$ and $(\frac{1}{h})'$, we will use classical results for the Riccati equation and its relation with second order linear O.D.E. as exposed in \cite{H}. 

It is a simple matter to verify that if $h$ satisfies (\ref{re}), then $\mu=\frac{1}{h}$ satisfies the Riccati equation given by
\begin{equation}\label{mre}
\mu'=g'\mu^2-kf.
\end{equation}

From the classical theory of complex O.D.E., see \cite{H} pages 113-114 and Theorem 5.3.1 (page 155), it follows that $z=0$ is a regular singular point of the second order O.D.E. associated to (\ref{mre}) that is given by
\begin{equation}
\label{ode}
w''+Pw'+Qw=0,
\end{equation}
where $P=-\frac{g''}{g'}$ and $Q=-kg'f$. 

The indicial equation for (\ref{ode}) is
\[
\nu^2+(P_{-1}-1)\nu+Q_{-2}=0,
\]
where
\[
P_{-1}=\lim_{z\rightarrow 0}zP, \,\,Q_{-2}=\lim_{z\rightarrow 0}z^2Q.
\]
From the form of $g$ and $f$, it follows after a simple computation that $P_{-1}=0$ and $Q_{-2}=-k$, so the roots of the indicial equation are 
\[
\lambda_{\pm}=\frac{1 \pm \sqrt{1+4k}}{2},
\]
and a basis of solutions (possibly multivalued) of (\ref{ode}) is given by
\[
w_{1}(z)=\zeta_{1}(z)z^{\lambda_{+}}, \,\,, w_{2}(z)=\zeta_{2}(z)z^{\lambda_{-}}+Cw_{1}(z)\ln{z},
\]
where $\zeta_{i}(z)$, $i=1,2,$ are holomorphic and non-zero at $z=0$ and $C$  is an arbitrary complex number. Note that as $h$ must be single valued, the functions $w_{1}$ and $w_{2}$ must also be single valued. This implies that $C=0$ and that $\lambda_{\pm}$ must be real and integer.

Thus the solution $w$ of (\ref{ode}) must have the form 
\[
w=c_{1}\zeta_{1}z^{\lambda_{+}}+c_{2}\zeta_{2}z^{\lambda_{-}}.
\]

Since $\lambda_{\pm}$ are integers, it follows that $\lambda_{+}-\lambda_{-}=\sqrt{1+4k}>1$, and a straightforward computation allows us to conclude that the leading term in a Laurent expansion around $z=0$ for the function $\frac{w'}{w}$ is $\frac{\lambda_{-}}{z}$ if $c_{1}\neq 0$ or $\frac{\lambda_{+}}{z}$ if $c_{1}=0$.

Thus, the leading term in a Laurent expansion around $z=0$ of $\mu=-\frac{w'}{wg'}$ is $-\frac{\lambda_{\pm}}{z}$. From the relation between $h$ and $\mu$ we conclude that $h$ has a pole of order one at $z=0$.

 If we write $\widetilde{\omega}=\widetilde{f}dz$, then, from (\ref{re}) and (\ref{nwd}), it follows that $\widetilde{f}=f-\frac{h'}{kh^2}=f+\frac{\mu'}{k}$. From the Laurent expansion for $\mu$ we deduce that the leading term in the Laurent expansion for $\widetilde{f}$ at $z=0$ is $(1+\frac{\lambda_{\pm}}{k})\frac{1}{z^2}$. Note that from the form of $\lambda_{\pm}$ we are sure that $1+\frac{\lambda_{\pm}}{k}\neq 0$.  

The above estimates for $\widetilde{\omega}$ and $\widetilde{g}$ imply that $\widetilde{\cZ}$ is asymptotic to a catenoid and has the same limiting value of the Gauss map as $\cZ$.       
\end{proof}

We will now discuss what happens with a planar minimal end (embedded or not) under a Ribaucour transformation. 
\begin{teo}
Let $\cZ:\D_\epsilon^\star \longrightarrow \R^3$ be a conformal parametrization of a complete   planar minimal  end without umbilic points. If  $\widetilde{\cZ} :\D_\epsilon^\star \longrightarrow \R^3$ is a minimal surface  and $(\cZ, \widetilde{\cZ})$ Ribaucour integrable,  then either $\widetilde{\cZ}$ is a complete  planar minimal  end or $\widetilde{\cZ}$ extends regularly to the origin.
\end{teo}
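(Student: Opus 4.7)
The plan is to adapt the Riccati/second-order ODE technique from the proof of Theorem~\ref{ft3} to the case when the base end is planar rather than catenoidal. The driving invariant will be the integer $m\geq 2$ giving the order of vanishing of the Gauss map $g$ of $\cZ$ at the end: after a rotation of $\R^{3}$ placing the limit normal at the south pole, the standard normalization of the Weierstrass data of a complete planar minimal end without umbilics reads $g(z)=z^{m}(1+O(z))$ with $m\geq 2$ and $\omega=f(z)\,dz$ with $f$ having a pole of some order $n\geq 2$, $n\neq m+1$, the last inequality being precisely the absence of the logarithmic (catenoidal) behavior of the third coordinate.

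With this setup in hand, I would take the function $h$ produced by Corollary~\ref{c4}, which satisfies the Riccati equation $h'=kh^{2}f-g'$, pass to $\mu=1/h$ and then $\mu=-w'/(g'w)$, reducing the problem to the linear second-order ODE
\[
w''-\frac{g''}{g'}\,w'-kg'f\,w=0.
\]
A direct check shows $z=0$ is a regular singular point with indicial exponents $\lambda_{-}=0$ and $\lambda_{+}=m$, differing by the positive integer $m$. Frobenius theory (see \cite{H}) then provides a basis of the form $w_{+}(z)=z^{m}\zeta_{+}(z)$ and $w_{-}(z)=\zeta_{-}(z)+Cw_{+}(z)\log z$, with $\zeta_{\pm}$ holomorphic and $\zeta_{\pm}(0)\neq 0$; single-valuedness of $h$ on $\D_{\epsilon}^{\star}$ forces our choice of $w$ to be free of the logarithmic term.

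The theorem's dichotomy then emerges from the two branches. If $w$ is a non-zero multiple of $w_{+}$, a short computation with $h=-wg'/w'$ gives $h\sim -z^{m}$; the leading $z^{m}$-terms of $g$ and $h$ cancel, so $\widetilde{g}=g+h$ has a zero of order strictly greater than $m$, while $\widetilde{\omega}=(g'/kh^{2})\,dz$ acquires a pole of order $m+1$ at $0$. Consequently $\widetilde{g}\,\widetilde{\omega}$ is residue-free (so the third coordinate of $\widetilde{\cZ}$ stays bounded) and $(1+|\widetilde{g}|^{2})|\widetilde{\omega}|\gtrsim|z|^{-(m+1)}$ (so the metric is complete), identifying $\widetilde{\cZ}$ with a complete planar minimal end at $z=0$. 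If instead $w$ has a non-trivial $w_{-}$ component, possible only when $C=0$, one finds $h\sim z/k$, a simple zero; then $\widetilde{\omega}\sim mk\,z^{m-3}\,dz$ and $\widetilde{g}\sim z/k$ are holomorphic at $z=0$, so the Weierstrass data extend across the origin and $\widetilde{\cZ}$ extends regularly to $z=0$.

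The main obstacle I foresee is the resonance at $\lambda_{-}=0$ caused by $\lambda_{+}-\lambda_{-}=m\in\Z^{+}$: the recurrence defining $\zeta_{-}$ may break at the critical index and force $C\neq 0$, killing the second branch. A careful check should show that $m=2$ is exactly this pathological case (leaving only the planar-end alternative), while for $m\geq 3$ the compatibility holds identically and both alternatives occur according to the initial data. Pinning this resonance down, and verifying that in the planar-end case the cancellation in $\widetilde{g}$ is genuinely of order $m+1$ (not higher, which would still yield a planar end but require extra care), are the main technical steps to complete.
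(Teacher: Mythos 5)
Your overall strategy --- passing from the Riccati equation (\ref{re}) to the linear equation $w''-(g''/g')w'-kfg'w=0$ and reading off the behaviour of $h$ from the Frobenius exponents at $z=0$ --- is sound, and it is essentially the linear-ODE version of what the paper does by substituting the ansatz $h=z^{m+1}H(z)$ directly into (\ref{re}). Your branch $w=c_{+}w_{+}$ correctly produces $h\sim -g$, the cancellation in $\widetilde{g}=g+h$, a pole of order $m+1$ for $\widetilde{\omega}$, and hence a complete planar end; this matches the paper's case $p=m$. However, there are genuine gaps. First, your normalization and your second branch are only worked out when $\omega$ has a pole of order exactly $2$. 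Writing, as the paper does, $f=z^{-(2+q)}$ and $g'=\psi z^{p}$ with $\psi(0)\neq 0$ (so your $m$ is $p+1$ and your $n$ is $q+2$), planarity of the end is the inequality $p\geq q+1$, i.e.\ $n\leq m$, not $n\neq m+1$; for $n>m+1$ the origin is an irregular singular point and your indicial computation does not apply. More importantly, in the $w_{-}$ branch the recursion gives $b_{1}=\cdots=b_{p-q-1}=0$ and $b_{p-q}\neq 0$, whence $h\sim \frac{q+1}{k}z^{q+1}$ rather than $h\sim z/k$, and $\widetilde{\omega}\sim z^{p-2q-2}\,dz$. This branch therefore extends regularly only when $p\geq 2q+2$; when $q+1\leq p\leq 2q$ (possible as soon as $n\geq 3$) it produces another complete planar end, since $\widetilde{g}\sim h\sim z^{q+1}$ and $\widetilde{g}\widetilde{\omega}\sim z^{p-q-1}$ is holomorphic. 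Your assignment ``first branch gives the planar end, second branch gives the regular extension'' is thus false in general: both alternatives of the theorem can occur within the second branch, and you miss its planar sub-case entirely.

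Second, the borderline case $p=2q+1$ (in your variables $m=2n-2$), where $\widetilde{\omega}$ would acquire a simple pole and the stated dichotomy would fail, must be excluded, and this is the technical heart of the paper's proof: an induction on the derivatives of $H$ at $0$ showing that a zero of $h$ of order $q+1$ together with $p=2q+1$ forces $\psi(0)=0$, a contradiction. You correctly sense that the mechanism is a non-trivial logarithmic resonance at the exponent difference, forcing $C\neq 0$ and killing the $w_{-}$ branch, but you only conjecture this and you locate it at $m=2$, which is merely the instance $q=0$; the resonance coefficient must be shown to be non-zero at $m=2q+2$ for every $q\geq 0$, and nothing in your argument does this. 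Until that is verified (equivalently, until the paper's induction is reproduced), the possibility that $\widetilde{\cZ}$ is neither a complete planar end nor regular at the origin is not ruled out, so the proof is incomplete.
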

\begin{proof}
Without loss of generality, we may assume that the Weierstrass data associated to $\cZ$ is given by $f(z)=\frac{1}{z^{2+q}}$, $g'(z)=\psi(z)z^{p}$ and $g(0)=0$, where $q\in \mathbb{\cZ},q\geq 0$, $p\geq q+1$ and $\psi$ is holomorphic such that $\psi(0)\neq 0$.

We first note that the solution $h$ of (\ref{re}) extends to $z=0$ with $h(0)=0$. Otherwise, $h$ would have a pole of order at least $l\geq 0$ at $z=0$. But then on the right hand side of (\ref{re}), we would have a function with pole of order at least $l+1$ at $z=0$ while on the left hand side of (\ref{re}) a function with a pole of order $2l+2+q$ which is a contradiction.

Therefore, we may assume that $h(z)=z^{m+1}H(z)$, where $m\geq 0$ and $H(z)$ is holomorphic and such that $H(0)\neq 0$.

Using (\ref{re}) we obtain relations between $p$, $q$ and $m$. In fact, substitution of the above expressions for $f(z)$, $g(z)$ and $h(z)$ into (\ref{re}) yields
\begin{equation}
\label{ce}
(m+1)H(z)+zH'(z)=kz^{m-q}(H(z))^{2}-z^{p-m}\psi(z).
\end{equation}

We separate our analysis in several cases. 

Note first that if $m\geq q$ then $p\geq m$ due to the fact that the right hand side of (\ref{ce}) is a holomorphic function. So in this case $p\geq m \geq q$.

By the same reason we can conclude that $m<q$ implies $p<m$, which would imply $p<q$, contrary to our hypothesis. 

If we had $p>m>q$ then from (\ref{re}) we would have $H(0)=0$. Thus, we are left with two cases: either $p=m$ or $m=q$.

We first consider the case where $p=m$. From (\ref{ce}) we conclude that $H(0)=-\frac{\psi(0)}{m+1}$. The Weierstrass data of $\widetilde{\cZ}$ in this case are $$\widetilde{f}=\frac{g'}{kh^2}=\frac{\psi}{kz^{m+2}H^2}, \,\, \widetilde{g}=g+h.$$
Thus, $\widetilde{g}$ would have a zero of order $p+1$ and $\widetilde{f}$ a pole of order $m+2$. Since $m\geq 1$ the end $\widetilde{Z}$ is not embedded.

Now we will see what happens if $m=q$. In this case, using (\ref{ce}), we would have $p\geq m+1$ and $$\widetilde{f}=\frac{z^{p}\psi}{z^{2m+2}H^2}.$$

If $p>2m+1$, then $\widetilde{f}$ extends to $z=0$ as a holomorphic function and we don't have an end. If $p<2m+1$ then, depending on the order of the pole of $\widetilde{f}$ at $z=0$ the end $\widetilde{\cZ}$ can be either embedded or not. To finish our proof we will prove that the case $p=2m+1$ cannot occur.

By contradiction, assume that $p=2m+1$. We will see that this assumption implies that $\psi(0)=0$ which is impossible. For $p=2m+1$ the equation  (\ref{ce}) is written as
\begin{equation}
\label{ceo}
(m+1)H+zH'=kH^{2}-z^{m+1}\psi.
\end{equation}
   
For $z=0$ we have $H(0)=\frac{m+1}{k}$. Differentiation yields

\[
(m+2)H'+zH''=2kHH'-(m+1)z^{m}\psi-z^{m+1}\psi',
\]
and for $z=0$ we obtain either
\[
2H'(0)=2H'(0)-\psi(0),
\]
if $m=0$, or
\[
(m+2)H'(0)=2(m+1)H'(0),
\]
if $m>0$. The first case ($m=0$) cannot happen because $\psi(0)\neq 0$, and so we conclude that $H'(0)=0$.

Now consider a positive integer $r<m+1$. The derivative of order $r$ of (\ref{ceo}) can be written as follows.

\begin{multline}
(m+1)H^{(r)}+\sum_{s=0}^{r}(z)^{(r-s)}(H^{(1)})^{(s)}\binom{r}{s} \\
=k\sum_{s=0}^{r}H^{(s)}H^{(r-s)}\binom{r}{s}-\sum_{s=0}^{r}(z^{m+1})^{(r-s)}\psi^{(s)}\binom{r}{s}.
\end{multline}

At $z=0$ we have
\[
(m+1)H^{(r)}(0)+rH^{(r)}(0)=2kH^{(r)}H(0)+T,
\]
where $T$ represents a sum of products of derivatives of $H$ up to order $r-1$. Using the value of $H(0)$ we obtain
\[
(r-(m+1))H^{(r)}(0)=T(0).
\]
So, as $H^{(1)}(0)=0$ it follows that $H^{(2)}(0)=0$. By iteration we may conclude that $H^{(r)}(0)=0$ for all $1\leq r<m+1$.

Finally, the derivative of order $m+1$ of (\ref{ceo}) at $z=0$ is given by
\[
2(m+1)H^{(m+1)}(0)=2kH^{(m+1)}(0)H(0)-\psi(0).
\]
But, using the fact that  $H(0)=\frac{m+1}{k}$, this would imply that $\psi(0)=0$, a contradiction.
\end{proof}

\section{Examples}
\label{examples}
In this section we obtain  the  Ribacour transformations of the catenoid  and show how to obtain Ribacour transformations of the trinoid.  
\subsection{Ribaucour transformations of the catenoid.}
It is well known, see \cite{We}, that 
 the Weierstrass data $f(z)= z^{-2}$, $g(z)=z$, correspond  to the  catenoid. The equation (\ref{re}) for this data is
\begin{equation}
\label{catenoideq}
h'=\frac{kh^2}{z^2}-1,  
\end{equation}
and its general solution can be written as
\begin{equation}
h(z)= \frac{2z \left( C -{z}^{m} \right) }{(m+1) z^m + (m-1) C}, \label{bonnet}
\end{equation}
where $C$ is complex constant and $m = \sqrt{1 + 4 k}$.

When $C=0$, the corresponding Ribaucour transformed is again a Catenoid and its associated flat front is a helicoidal flat front, see \cite{MST}, with
$$\cG_- = \frac{m-1}{m+1} \cG_+.$$
If $C\neq 0$, the Ribaucour transform is well defined if and only if $m$ is an integer and  the corresponding Ribaucour transformed has $|m|$ embedded planar ends at the zeros of $z^m - C$ and two ends of catenoid type at $z=0$ and $z=\infty$.  In this case the associated flat front in $\H^3$ has $|m|$ ends of horospherical type and two ends of rotational type.

In Figure \ref{mirror} and using the conformal ball model for $\mathbb{H}^3$, we have the flat front corresponding  to the particular solution 
 $$h(z)=\frac{z \left( 2-{z}^{3} \right) }{2({z}^{3}+1)}.$$ 
  \begin{figure}[H]
  \begin{center}
    \includegraphics [width=0.5\linewidth]{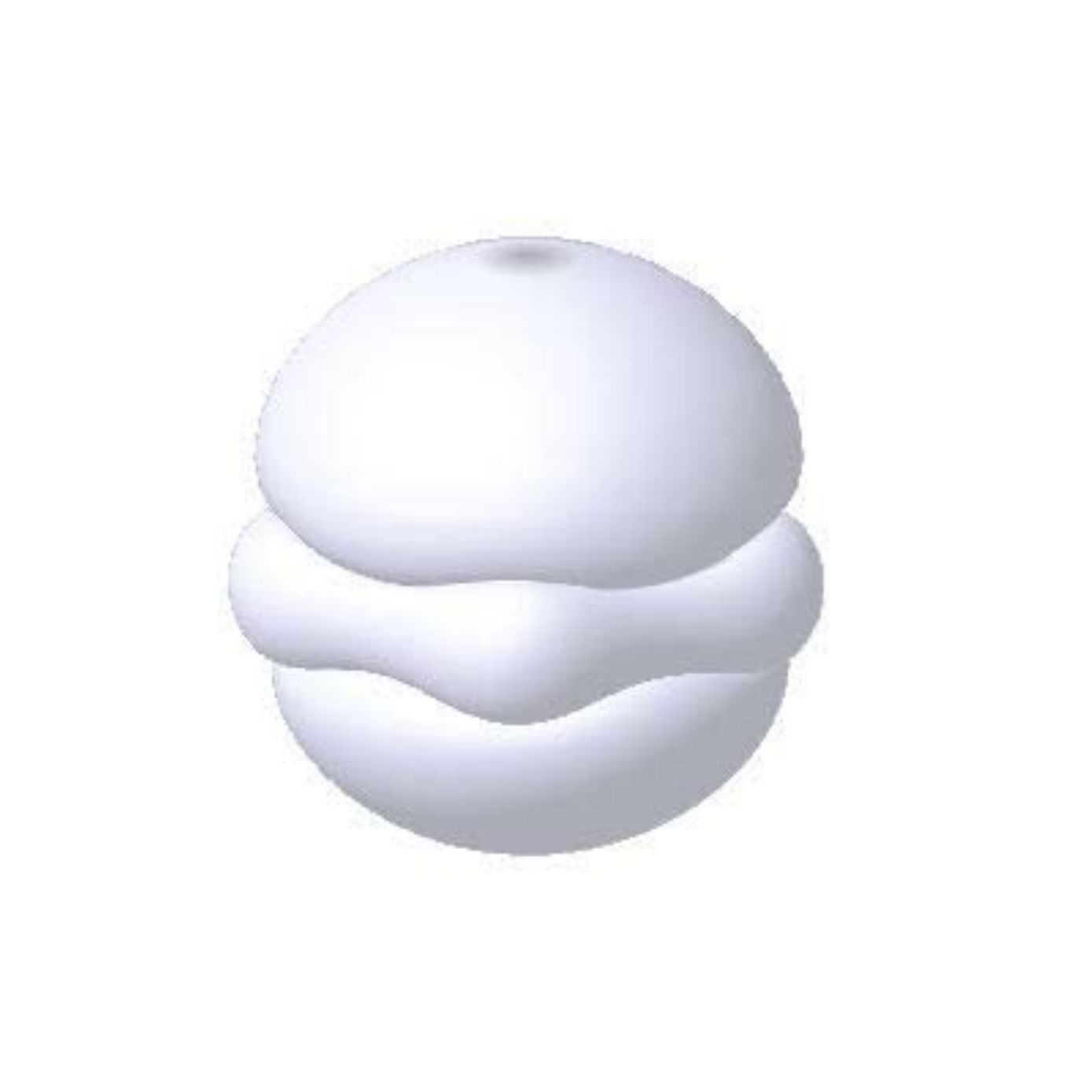}
  \end{center}
  \caption{Flat front associated to a catenoid}
  \label{mirror}
\end{figure} 

\subsection{Ribaucour transformation of the trinoid.}

A minimal trinoid is generated by the Weierstrass data given by $g(z)=z^2$ and $f(z)=\frac{1}{(z^3-1)^2}$, see \cite{JM}. The equation (\ref{re}) for this data is
\begin{equation}
\label{trinoideq}
h'=\frac{kh^2}{(z^3-1)^2}-2z.  
\end{equation}

The general solution of this equation rather complicated and involves hypergeometric functions. However, by choosing special values of the constant $k$ we get relatively simple solutions. For instance, for $k=5$, it is easy to check that
\[
h(z)=z^2(z^3-1),
\]
is a solution of (\ref{trinoideq}).

A rough sketch of the corresponding minimal surface is shown in Figure \ref{trinoid}, note that in addition to the three catenoid type ends the surface also has one planar end.
 \begin{figure}[H]
\begin{center}
\includegraphics[width=0.6\linewidth]{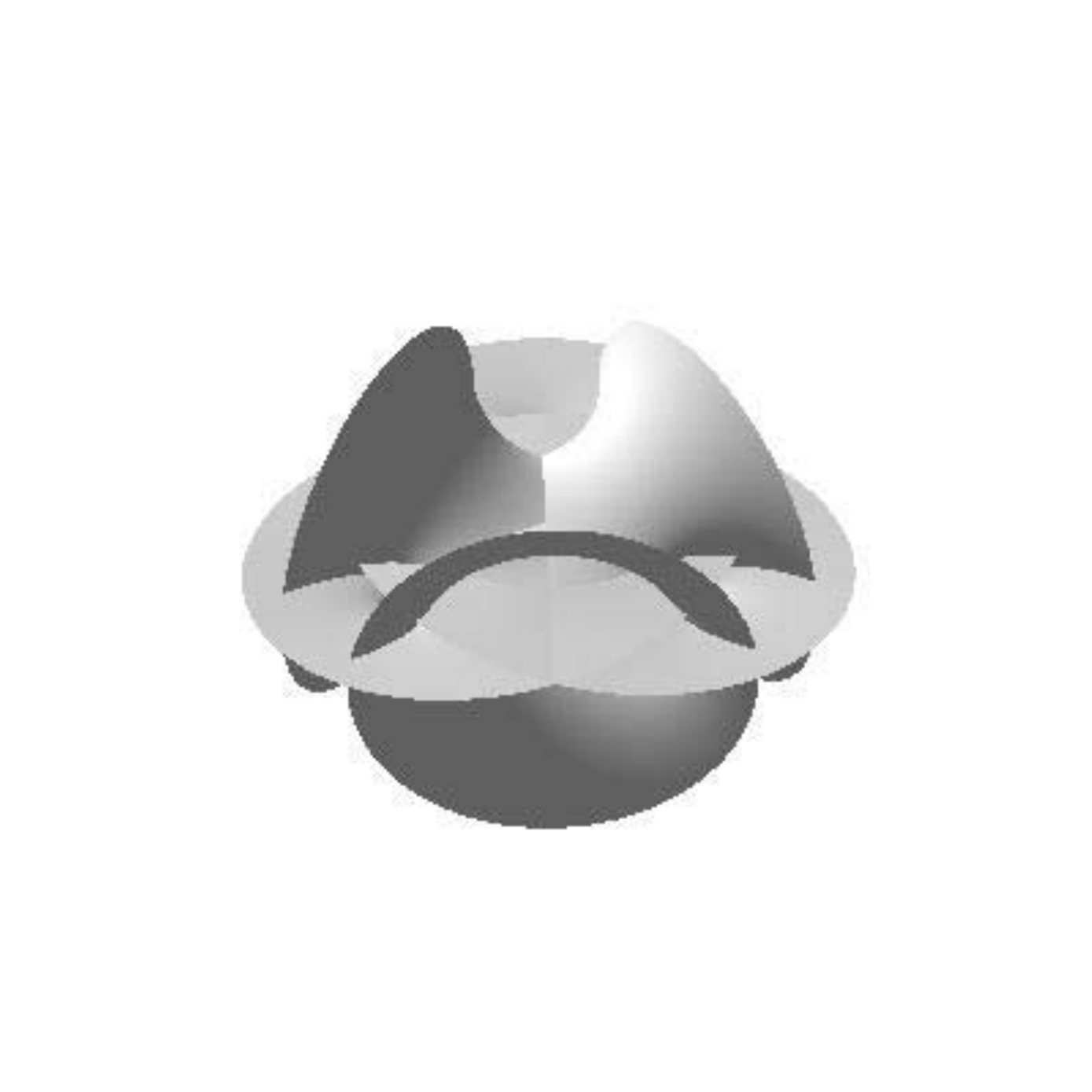} 
\end{center}
  \caption{Ribaucour transform of the trinoid}\label{trinoid}
\end{figure}

\section{Concluding remarks}
\label{finalremarks}
\begin{itemize}
{\rm 
\item Propositions \ref{p1} and \ref{p3} are the source of all the results in this paper and doubtless they will have other applications. For example, it is known that linear Weingarten surfaces in $\H^3$ of Bryant type, see \cite{GMM2}, admit a representation  by holomorphic data. To understand why this is possible, we can consider a Ribaucour integrable pair $(\cZ,\widetilde{\cZ})$  where $\cZ:\Sigma \longrightarrow \R^3$ is a minimal surface. In general, $\widetilde{\cZ}$ is not necessarily a minimal surface, but, possibly after a homothety  in $\R^3$, we have Ribaucour data $(\rho,\phi)$  so that 
$$ | \nabla^m\phi|^2 + \rho^2 = \rho \phi + \epsilon.$$  
Thus, from Theorem \ref{wein},  the front in $\H^3$ associated, via Proposition \ref{p3}, with the pair  $(\cZ,\widetilde{\cZ})$ is  a linear Weingarten front of Bryant type satisfying,
$$ 2 \epsilon (H-1) + (1-\epsilon) K_I =0.$$
This geometric relationship might be used to discover new results concerning Ribaucour transformations of minimal surfaces by using the knowledge about linear Weingarten fronts of Bryant type and vice-versa.
\item It should be also interesting to study the family of fronts in $\H^3$ coming from Ribaucour integrable pairs of surfaces with constant mean curvature in $\R^3$.
\item From Theorem  \ref{t0}, any front  $\bX: \Sigma\longrightarrow \H^3$ can be represented in terms of its hyperbolic Gauss maps $\cG_+, \ \cG_-:\Sigma \longrightarrow \S^2$
by the expression given in \eqref{eq5g}, where $\rho^+$ and $\rho^-$ are determined by \eqref{rhop}, \eqref{rhoplus} and \eqref{rhominus}. 

Observe that there exists a front in $\H^3$ with hyperbolic Gauss maps $\cN_+, \ \cN_-:\Sigma \longrightarrow \S^2$ if and only if the 1-form 
$$ \frac{<\cN^-, d\cN^+> - <\cN^+, d\cN^-> }{ 2 ( 1 - < \cN^+,\cN^->)}$$
is exact.
\item Corollary \ref{c4} reformulates the classical theory of Ribaucour transformations between minimal surfaces in terms of a complex Ricatti ordinary differential equation. This new approach simplifies and extends the classical theory to surfaces with non trivial topology.
}
\end{itemize}

\end{document}